\newtheorem{theorem}{Theorem}
\newtheorem{corollary}{Corollary}
\newtheorem{definition}{Definition}
\newtheorem{lemma}{Lemma}
\newtheorem{notation}[theorem]{Notation}
\newtheorem{proposition}{Proposition}
\newcommand{\QED}{\hfill {\qedhere}}
\begin{document}

\title{The Title of a Standard LaTeX Article}
\author{A. U. Thor \\
The University of Stewart Island}

\begin{center}
\textbf{\large Nearly Spectral Spaces}\bigskip

Lorenzo Acosta G.\footnote{%
Mathematics Department, Universidad Nacional de Colombia, AK 30 45-03, Bogot%
\'{a}, Colombia. e-mail: lmacostag@unal.edu.co} and I. Marcela Rubio P.%
\footnote{%
Corresponding author. Mathematics Department, Universidad Nacional de
Colombia, AK 30 45-03, Bogot\'{a}, Colombia. e-mail: imrubiop@unal.edu.co}

\bigskip
\end{center}

\begin{quote}
\textbf{Abstract: }We study some natural generalizations of the spectral
spaces in the contexts of commutative rings and distributive lattices. We
obtain a topological characterization for the spectra of commutative (not
necessarily unitary) rings and we find spectral versions for the up-spectral
and down-spectral spaces. We show that the duality between distributive
lattices and Balbes-Dwinger spaces is the co-equivalence associated to a
pair of contravariant right adjoint functors between suitable categories.

\textbf{Keywords: }Spectral space, down-spectral space, up-spectral space,
Stone duality, prime spectrum, distributive lattice, commutative ring.

\textbf{MSC: }54H10, 54F65, 54D35.
\end{quote}

\section{Introduction}

A \textit{spectral space} is a topological space that is homeomorphic to the
prime spectrum of a commutative unitary ring. This type of spaces were
topologically characterized by Hochster \cite{Hochster} as the \textit{sober}%
, \textit{coherent} and \textit{compact} spaces. On the other hand, it is
known that a topological space is a spectral space if and only if it is
homeomorphic to the prime spectrum of a distributive bounded lattice \cite%
{Simmons}, \cite{Acosta}.

Therefore, this notion has two natural generalizations: the first in the
context of rings and the second in the context of lattices:

We say that:

(1) a topological space is \textit{almost-spectral} if it is homeomorphic to
the prime spectrum of a commutative (not necessarily unitary) ring,

(2) a topological space is a \textit{Balbes-Dwinger space} if it is
homeomorphic to the prime spectrum of a distributive (not necessarily
bounded) lattice.\footnote{%
In \cite{Balbes-D} this type of spaces are called \textit{Stone spaces}.
However, in several other references, for example \cite{Johnstone}, a Stone
space is a compact, Hausdorff and totally disconnected space.}

In Chapter VI of \cite{Balbes-D}, there is a topological characterization of
the Balbes-Dwinger spaces (called there Stone spaces). As far as we know, in
the literature, there is no topological characterization for the
almost-spectral spaces.

Furthermore, there exist generalizations on a topological point of view \cite%
{Echi 1}, \cite{Echi 2}:

(3) a topological space is called \textit{up-spectral} if it is sober and
coherent,

(4) a topological space is called \textit{down-spectral} if it is coherent,
compact and every proper irreducible closed set is the closure of a unique
point.

It is natural to ask if the notions in (3) and (4) have \textquotedblleft
spectral versions\textquotedblright , that is, if the corresponding spaces
are homeomorphic to prime spectra of some kind of rings or lattices.

In this paper we show that all these topological spaces are particular cases
of certain class of topological spaces (named here \textit{RA-spaces}) and
we give spectral versions for all of them. In addition, we give a
topological characterization of the almost-spectral spaces and a new,
simpler, topological characterization of the Balbes-Dwinger spaces.

Actually, we extend the co-equivalence (or duality) between the category of
distributive bounded lattices and the category of spectral spaces presented
in \cite{Balbes-D} to a pair of contravariant, adjoint functors between the
category of distributive lattices and the category of RA-spaces. By means of
this adjunction, all the mentioned types of topological spaces arise
naturally and the relationship between them becomes clear. In particular, we
can easily deduce the duality between up-spectral and down-spectral spaces
studied in \cite{Echi 2}.

\section{Preliminaries}

We recall some basic definitions and facts that will be useful in the next
sections.

\begin{notation}
If $g:X\rightarrow Y$ is a function, we denote $g^{\ast }$ the inverse image
function defined by 
\begin{equation*}
g^{\ast }:\mathcal{\wp }(Y)\rightarrow \mathcal{\wp }(X):B\mapsto g^{\ast
}\left( B\right) =\left\{ x\in X:g\left( x\right) \in B\right\} .
\end{equation*}
\end{notation}

\subsection{Lattice theory notions}

A \textit{lattice} is a non empty partially ordered set (or poset) such that
every pair of elements $a,b$ has least upper bound (or join) $a\vee b,$ and
greatest lower bound (or meet) $a\wedge b.$ The lattice is \textit{%
distributive} if $\vee $ is distributive with respect to $\wedge $
(equivalently $\wedge $ is distributive with respect to $\vee $). The
lattice is \textit{bounded} if it has least (or minimum) and greatest (or
maximum) elements, usually denoted $0$ and $1,$ respectively. An ideal of a
lattice is a non empty lower subset that is closed under finite (non empty)
joins. A proper ideal $I$ is prime if $a\wedge b\in I$ implies $a\in I$ or $%
b\in I.$

A map $\alpha :L\rightarrow M$ between lattices is a \textit{homomorphism}
if for each pair of elements $a,b\in L$, $\alpha (a\wedge b)=\alpha
(a)\wedge \alpha (b)$ and $\alpha (a\vee b)=\alpha (a)\vee \alpha (b).$ The
homomorphism $\alpha $ is \textit{proper} if the inverse image of any prime
ideal of $M$ is a prime ideal of $L.$

\noindent The \textit{prime spectrum of a lattice} $L$ is the set of its
prime ideals endowed with the \textit{Zariski (or hull-kernel) topology},
whose basic open sets are the sets 
\begin{equation*}
d(a)=\left\{ I:I\text{ is a prime ideal of }L\text{ and }a\notin I\right\} ,
\end{equation*}%
where $a\in L.$ We denote this space by $\mathfrak{spec}(L).$ Actually, $%
d:L\rightarrow \wp \left( \mathfrak{spec}(L)\right) $ is a homomorphism of
lattices such that $d\left( 0\right) =\emptyset ,$ when $L$ has minimum and $%
d\left( 1\right) =\mathfrak{spec}(L),$ when $L$ has maximum. This
homomorphism is injective if and only if the lattice $L$ is distributive. It
is known that for each $a\in L$, $d\left( a\right) $ is a compact subspace
of $\mathfrak{spec}(L)$.

\subsection{Ring theory notions}

Similarly, the \textit{prime spectrum of a commutative ring} $A$ is defined
as the set of its prime ideals endowed with the \textit{Zariski (or
hull-kernel) topology}, whose basic open sets are the sets 
\begin{equation*}
D(a)=\left\{ P:P\text{ is a prime ideal of }A\text{ and }a\notin P\right\} ,
\end{equation*}%
where $a\in A.$ In this case the closed sets are%
\begin{equation*}
V(I)=\{P:P\text{ is a prime ideal of }A\text{ and }P\supseteq I\},
\end{equation*}%
where $I$ is an ideal of $A.$ We denote this space by $Spec(A),$ as usual.
Notice that $D:A\rightarrow \wp \left( Spec(A)\right) $ is such that for
each $a,b\in A,$ $D(ab)=D(a)\cap D(b)$ and $D(a+b)\subseteq D(a)\cup D(b).$
It is also known that the basic open sets are compact. Therefore, the prime
spectrum of a commutative unitary ring is a compact topological space;
however, compactness of $Spec(A)$ is not equivalent to existence of identity
in $A$. The following theorem, taken from \cite{Acosta-Rubio}, is useful:

\begin{theorem}
\label{nilcompactacion} Let $S$ be a commutative ring.

(i) If $R$ is a commutative ring such that $S$ is an ideal of $R,$ then $%
Spec(S)$ is homeomorphic to the open subspace $V(S)^{c}$ of $Spec(R).$

(ii) There exists a commutative unitary ring $Q(S)$ such that $Spec(S)$ is
homeomorphic to an open-dense subspace of $Spec(Q(S)).$
\end{theorem}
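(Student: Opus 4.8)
The plan is to treat the two parts separately, proving (i) by an explicit homeomorphism and then bootstrapping (ii) from (i) via a unitalization of $S$.

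For (i), I would set up two candidate maps between $V(S)^{c}$ and $Spec(S)$. Going down, define $\phi(P)=P\cap S$; since $S$ is an ideal of $R$ this is an ideal of $S$, it is proper precisely because $S\not\subseteq P$, and primeness transfers directly from $P$ (if $ab\in P\cap S$ then $a\in P$ or $b\in P$, and the relevant factor already lies in $S$). Going up, define $\psi(\mathfrak{q})=\{r\in R: rS\subseteq\mathfrak{q}\}$. The first point to check is that $\psi(\mathfrak{q})$ is genuinely an ideal of $R$: the only delicate step is absorption, which follows from $(r'r)s=r(r's)\in rS\subseteq\mathfrak{q}$, using commutativity and that $r's\in S$. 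That $\psi(\mathfrak{q})$ is prime I would derive in the ideal-form of primeness, from $(aS)(bS)\subseteq (ab)S\subseteq\mathfrak{q}$. The identities making $\phi,\psi$ mutually inverse are $s^{2}\in\mathfrak{q}\Rightarrow s\in\mathfrak{q}$ (which gives $\psi(\mathfrak{q})\cap S=\mathfrak{q}$) and, for the other composite, the fact that $rs_{0}\in P$ for some $s_{0}\in S\setminus P$ forces $r\in P$ by primeness.

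Once the bijection is in place, I would check it is a homeomorphism on basic opens. For $a\in S$ one has $\phi^{-1}(D(a))=D(a)\cap V(S)^{c}$, which gives continuity of $\phi$; for $r\in R$ one computes $\psi^{-1}(D(r)\cap V(S)^{c})=\bigcup_{s\in S}D(rs)$, where each $rs\in S$, which gives continuity of $\psi$. This closes (i), and I expect no real obstacle here beyond bookkeeping with the two equivalent notions of primeness.

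For (ii), I would take $Q(S)$ to be a unitalization of $S$ in which $S$ embeds as an ideal — the Dorroh extension $\mathbb{Z}\oplus S$, or its variant over the prime ring matching the characteristic of $S$ — so that $Q(S)$ is commutative unitary and $S$ is an ideal. Part (i) then immediately realizes $Spec(S)$ as the open subspace $V(S)^{c}=D(S)$ of $Spec(Q(S))$. The real content is \textbf{density}: $D(S)$ is dense iff $V(S)$ has empty interior, equivalently iff $\bigcap_{\mathfrak{p}\not\supseteq S}\mathfrak{p}\subseteq\sqrt{0_{Q(S)}}$. Since the primes of $Q(S)$ containing $S$ are exactly those pulled back from $Q(S)/S$, I would verify that each such prime lies in the closure of $D(S)$ by showing every basic neighbourhood $D((n,s))$ of it still meets $V(S)^{c}$. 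This is the main obstacle, and it is exactly where the choice of $Q(S)$ matters: the unitalization must be arranged so that no nonzero scalar annihilates $S$ into its nilradical, since otherwise an element of the form $(n,0)$ would sit in every prime missing $S$ and spoil density. Handling this characteristic-sensitive point is the crux of the argument.
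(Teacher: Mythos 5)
Your part (i) is correct and essentially complete. The mutually inverse maps $\phi(P)=P\cap S$ and $\psi(\mathfrak{q})=\{r\in R:rS\subseteq \mathfrak{q}\}$, the ideal-form primeness argument via $(aS)(bS)\subseteq (ab)S\subseteq \mathfrak{q}$, the identities $\psi(\mathfrak{q})\cap S=\mathfrak{q}$ (from $s^{2}\in\mathfrak{q}\Rightarrow s\in\mathfrak{q}$) and $\psi(\phi(P))=P$ (from $rs_{0}\in P$, $s_{0}\in S\setminus P$), and the computations $\phi^{-1}(D(a))=D(a)\cap V(S)^{c}$ and $\psi^{-1}(D(r)\cap V(S)^{c})=\bigcup_{s\in S}D(rs)$ are all sound. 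Note that the paper itself gives no proof of this theorem: it is imported from \cite{Acosta-Rubio}, so the comparison must be with that source, where (i) is argued along exactly these lines.

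Part (ii), however, contains a genuine gap, and you have located it yourself without closing it: no Dorroh-type unitalization, whether over $\mathbb{Z}$ or over the prime ring matching the characteristic of $S$, can deliver density in general, so the obstacle is not ``characteristic-sensitive'' scalar annihilation. Concretely, if $S$ already has an identity $e$ and sits as an ideal in any commutative unitary ring $Q$, then $e$ is a central idempotent of $Q$ and $Q\cong S\times (1-e)Q$, so the copy $V(S)^{c}\cong Spec(S)$ is \emph{clopen} in $Spec(Q)$ and is dense only when $(1-e)Q=0$, i.e.\ $Q=S$. Thus for $S=\mathbb{Q}$ (characteristic $0$, so your proposed ring is $\mathbb{Z}\oplus\mathbb{Q}$ itself) density fails, and likewise $\mathbb{Z}/2\mathbb{Z}\oplus\mathbb{F}_{2}\cong \mathbb{F}_{2}\times\mathbb{F}_{2}$ fails for $S=\mathbb{F}_{2}$: the defect is that the ideal $J=\bigcap\{P\in Spec(A_{1}):P\not\supseteq S\}$ of the unitalization $A_{1}=\mathbb{Z}\oplus S$ need not be nil, and no choice of scalars repairs this. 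The missing idea is a quotient, not a cleverer unitalization: take $Q(S)=A_{1}/J$. By your part (i), $V(S)^{c}\cong Spec(S)$ is open in $Spec(A_{1})$; every $P\in V(S)^{c}$ contains $J$, so $V(S)^{c}\subseteq V(J)$, it is open in $V(J)$, and its closure in $Spec(A_{1})$ is exactly $V(J)$ because $J$ is by definition the intersection of the points of $V(S)^{c}$. Composing with the standard homeomorphism $V(J)\approx Spec(A_{1}/J)$ recalled in the paper's preliminaries exhibits $Spec(S)$ as an open-dense subspace of $Spec(Q(S))$, with $Q(S)$ commutative and unitary (in the degenerate case $Spec(S)=\emptyset$ one gets $J=A_{1}$ and $Q(S)$ the zero ring, where the claim holds vacuously). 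This quotient construction is precisely the nilcompactification $U_{0}$ of \cite{Acosta-Rubio}, which the paper invokes again in the proof of Theorem \ref{Charact almost-spectral}.
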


Another known fact is that for each ideal $I$ of the ring $A$ the function 
\begin{equation*}
\theta :V(I)\rightarrow Spec\left( A/I\right) :P\mapsto P/I
\end{equation*}
is a homeomorphism \cite{Atiyah}.

\subsection{Topological notions}

A subset $F$ of a topological space is an \textit{irreducible closed set} if 
$F$ is a non-empty closed set such that for every pair of closed sets $G$
and $H,$ $F=G\cup H$ implies $F=G$ or $F=H.$ We say that $U$ is a \textit{%
prime open set} if its complement is an irreducible closed set.\textbf{\ }

A space is called \textit{sober} if every irreducible closed set is the
closure of a unique point.

A space is called \textit{coherent} if it has a basis of open-compact sets
that is closed under finite intersections.

For example, an infinite set $X$ endowed with the co-finite topology is
coherent, but it is not sober since $X$ is an irreducible closed set that is
not the closure of any point. Notice that, in this example, all \textit{%
proper} irreducible closed sets are, in fact, closures of points.

We give then the following definition:

\begin{definition}
A topological space is \textbf{almost-sober }if every proper irreducible
closed set is the closure of some point\footnote{%
This notion is not taken from the literature. The notions of \textit{%
semi-sober} and \textit{quasi-sober} are found for example in \cite{Echi 2}\
and \cite{Echi 3}\ respectively, but their meanings are different.}.
\end{definition}

\noindent The following definition is taken from \cite{Balbes-D}.

\begin{definition}
Let $X$ be a topological space. We say that $A\subseteq X$ is \textbf{%
fundamental} if

\begin{description}
\item i) $A$ is a non-empty and open-compact set, or

\item ii) $A=\emptyset $ and for every non-empty collection $\mathcal{A}$ of
non-empty open-compact sets whose intersection is empty, there exists a
finite subcollection of $\mathcal{A}$ with empty intersection.
\end{description}

\noindent We denote $\mathfrak{F}\left( X\right) $ the collection of
fundamental subsets of $X$.
\end{definition}

\noindent Notice that $\emptyset $ is fundamental if every non-empty
collection of open-compact sets with the finite intersection\ property has
non-empty intersection.

A map $f:X\rightarrow Y$ between topological spaces is \textit{strongly
continuous} if it is continuous and the inverse image of a fundamental
subset of $Y$ is a fundamental subset of $X.$\footnote{%
This definition coincides with the one given in \cite{Balbes-D} for the
bounded Balbes-Dwinger spaces.}

Recall that if $X$ is a preordered set, the \textit{Alexandroff (or upper
sets) topology} on $X$ is the topology generated by $\left\{ \uparrow x:x\in
X\right\} ,$ where $\uparrow x=\left\{ y\in X:y\geq x\right\} .$ Notice that 
$\uparrow x$ is an open-compact set in this topological space, thus, every
totally ordered set with its Alexandroff topology is a coherent space.

We present now the topological characterization of the Balbes-Dwinger spaces
given in \cite{Balbes-D}:

\begin{theorem}
A topological space is a Balbes-Dwinger space if, and only if, it is $T_{0},$
coherent and the following condition is satisfied: For every pair of
non-empty collections $\mathcal{A}$ and $\mathcal{B}$ of non-empty
open-compact sets such that $\bigcap\limits_{A\in \mathcal{A}}A\subseteq
\bigcup\limits_{B\in \mathcal{B}}B$, there exist finite subcollections $%
\mathcal{A}_{1}$ of $\mathcal{A}$ and $\mathcal{B}_{1}$ of $\mathcal{B}$
such that $\bigcap\limits_{A\in \mathcal{A}_{1}}A\subseteq
\bigcup\limits_{B\in \mathcal{B}_{1}}B.$\bigskip 
\end{theorem}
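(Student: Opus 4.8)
The plan is to prove both implications, taking as the representing lattice the family $\mathfrak{F}(X)$ of fundamental subsets of $X$, ordered by inclusion. For the necessity direction, assume $X=\mathfrak{spec}(L)$ with $L$ distributive. That $X$ is $T_{0}$ and coherent is essentially contained in the preliminaries: the basic sets $d(a)$ are open-compact, $d(a)\cap d(b)=d(a\wedge b)$ so they form a basis closed under finite intersection, and two distinct prime ideals are separated by some $d(a)$. The real content is the separating condition. First I would observe that every nonempty open-compact subset of $\mathfrak{spec}(L)$ equals $d(a)$ for some $a\in L$: such a set is covered by basic sets, compactness extracts a finite subcover, and $d(a_{1})\cup\cdots\cup d(a_{n})=d(a_{1}\vee\cdots\vee a_{n})$. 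Writing $\mathcal{A}=\{d(a):a\in A'\}$ and $\mathcal{B}=\{d(b):b\in B'\}$, the hypothesis $\bigcap_{a\in A'}d(a)\subseteq\bigcup_{b\in B'}d(b)$ says that no prime ideal contains $B'$ while avoiding $A'$; equivalently, the ideal generated by $B'$ and the filter generated by $A'$ cannot be separated by a prime ideal. By the prime separation theorem for distributive lattices they must intersect, yielding finitely many $a_{1},\dots,a_{k}\in A'$ and $b_{1},\dots,b_{m}\in B'$ with $a_{1}\wedge\cdots\wedge a_{k}\le b_{1}\vee\cdots\vee b_{m}$; applying $d$ produces the required finite subcollections $\mathcal{A}_{1}$ and $\mathcal{B}_{1}$.

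For the sufficiency direction, assume $X$ is $T_{0}$, coherent, and satisfies the condition. The first task is to verify that $\mathfrak{F}(X)$ is a distributive lattice under $\cap$ and $\cup$; granting this, define $\varphi\colon X\rightarrow\mathfrak{spec}(\mathfrak{F}(X))$ by $\varphi(x)=\{A\in\mathfrak{F}(X):x\notin A\}$. A direct check shows $\varphi(x)$ is a prime ideal, and that $\varphi^{\ast}(d(A))=A$ for every $A\in\mathfrak{F}(X)$, so $\varphi$ is continuous and, since the $A$ form a basis of $X$ while the $d(A)$ form a basis of the target, $\varphi$ is open onto its image. Injectivity follows from $T_{0}$: if $x\neq y$ then some open-compact set contains one but not the other, separating $\varphi(x)$ from $\varphi(y)$.

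Surjectivity is where the special condition is used. Given a prime ideal $P$, I would set $\mathcal{A}=\mathfrak{F}(X)\setminus P$ and let $\mathcal{B}$ be the nonempty members of $P$, and seek a point in $\bigcap_{A\in\mathcal{A}}A$ lying outside $\bigcup_{B\in\mathcal{B}}B$; such a point $x$ satisfies $\varphi(x)=P$. If no such point existed we would have $\bigcap_{A\in\mathcal{A}}A\subseteq\bigcup_{B\in\mathcal{B}}B$, and the condition would return finite subcollections $\mathcal{A}_{1}$ and $\mathcal{B}_{1}$; but $\bigcap\mathcal{A}_{1}\in\mathfrak{F}(X)\setminus P$ while $\bigcup\mathcal{B}_{1}\in P$, and the inclusion together with the fact that $P$ is a lower set forces $\bigcap\mathcal{A}_{1}\in P$, a contradiction. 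Combining injectivity, surjectivity, and the openness/continuity already established yields that $\varphi$ is a homeomorphism.

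The main obstacle, and the step deserving the most care, is the bookkeeping around the empty set, which is precisely what the notion of \emph{fundamental} set is designed to absorb. I expect the delicate points to be: showing that $\mathfrak{F}(X)$ is genuinely closed under finite meets (so that it is a lattice at all), which depends on whether $\emptyset$ is fundamental and hence on the finite-intersection reading of the hypothesis; and confirming that the separating condition is exactly strong enough to prevent spurious prime ideals — such as an unrealized generic point — from appearing in $\mathfrak{spec}(\mathfrak{F}(X))$. Accordingly the cases $\emptyset\in P$ and $P=\{\emptyset\}$ in the surjectivity argument must be handled through the definition of fundamental set rather than by the generic computation above, and it is here that the coherence hypothesis and the condition jointly do the essential work.
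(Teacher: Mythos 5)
First, a point of comparison: the paper does not prove this theorem at all --- it is quoted from \cite{Balbes-D} as a known characterization --- so there is no in-paper proof to match line by line. That said, your plan reconstructs exactly the machinery the paper builds around this result in Section 3: your map $\varphi$ is the paper's $h_{X}:X\rightarrow \mathfrak{spec}\left( \mathfrak{F}\left( X\right) \right)$, your computation $\varphi ^{\ast }\left( d\left( A\right) \right) =A$ is Proposition \ref{hx es fuertemente continua}, and your injectivity-from-$T_{0}$ and surjectivity-from-the-condition arguments parallel the paper's Propositions 3 and \ref{h sobre ssi X casi-sobrio} (your main surjectivity case, with $\mathcal{A}=\mathfrak{F}\left( X\right) \setminus P$ and $\mathcal{B}=P\setminus \left\{ \emptyset \right\} $, is a correct and in fact slicker substitute for the paper's prime-open-set argument in Case 2 there). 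Your necessity direction --- every non-empty open-compact subset of $\mathfrak{spec}\left( L\right)$ is some $d(a)$, then Stone's prime separation applied to the ideal generated by $B^{\prime }$ and the filter generated by $A^{\prime }$ --- is also correct.

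The one substantive step you flag but never actually supply is the claim that the displayed condition forces $\mathfrak{F}\left( X\right) $ to be closed under intersection, equivalently that $\emptyset $ is fundamental whenever two disjoint non-empty open-compact sets exist. This is load-bearing: without it, already your assertion that $\varphi \left( x\right) $ is a prime ideal does not parse, since primeness quantifies over meets in $\mathfrak{F}\left( X\right)$. The claim is true, but it needs a small trick: if $F\cap G=\emptyset $ with $F,G$ non-empty open-compact, and $\mathcal{C}$ is any non-empty collection of non-empty open-compact sets with $\bigcap \mathcal{C}=\emptyset $, apply the condition \emph{twice}, to the pairs $\left( \mathcal{C},\left\{ F\right\} \right) $ and $\left( \mathcal{C},\left\{ G\right\} \right) $ (the hypotheses $\emptyset \subseteq F$ and $\emptyset \subseteq G$ are trivial), obtaining finite $\mathcal{C}_{1},\mathcal{C}_{2}\subseteq \mathcal{C}$ with $\bigcap \mathcal{C}_{1}\subseteq F$ and $\bigcap \mathcal{C}_{2}\subseteq G$; then $\mathcal{C}_{1}\cup \mathcal{C}_{2}$ is a finite subcollection with empty intersection, so $\emptyset $ is fundamental and $\mathfrak{F}\left( X\right) $ is a sublattice of $\wp \left( X\right) $, hence distributive. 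Your other deferred case, $P=\left\{ \emptyset \right\} $, closes exactly as you indicate and as in Case 1 of the paper's proof of Proposition \ref{h sobre ssi X casi-sobrio}: primeness of $P$ gives the finite intersection property for $\mathfrak{F}\left( X\right) \setminus \left\{ \emptyset \right\} $, fundamentality of $\emptyset $ then yields a point lying in all non-empty fundamental sets, and that point maps to $P$. With the two-application trick written out, your proposal is a complete and correct proof.
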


\subsection{Balbes-Dwinger duality}

Let $\mathfrak{D}_{p}$ be the category of distributive lattices and proper
homomorphisms and let $\mathfrak{BD}$ be the category of Balbes-Dwinger
spaces and strongly continuous functions. We denote $\mathfrak{D}_{0}^{1}$
the full subcategory of $\mathfrak{D}_{p}$ whose objects are the
distributive bounded lattices and $\mathfrak{S}$ the full subcategory of $%
\mathfrak{BD}$ whose objects are the spectral spaces. If for each morphism $%
\alpha $ in $\mathfrak{D}_{p}$ we define $\mathfrak{spec}(\alpha )=\alpha
^{\ast }$ and for each morphism $f$ in $\mathfrak{BD}$ we define $\mathfrak{F%
}(f)=f^{\ast }$, we have that $\mathfrak{spec}:\mathfrak{D}_{p}\rightarrow 
\mathfrak{BD}$ and $\mathfrak{F}:\mathfrak{BD}\rightarrow \mathfrak{D}_{p}$
are contravariant functors. The following theorem is taken from \cite{Acosta}
and is an extension of a result in \cite{Balbes-D}.

\begin{theorem}
\label{spec es equivalencia}The functors $\mathfrak{spec}:\mathfrak{D}%
_{p}\rightarrow \mathfrak{BD}$ and $\mathfrak{F}:\mathfrak{BD}\rightarrow 
\mathfrak{D}_{p}$ are co-equivalences of categories such that $\mathfrak{spec%
}\circ \mathfrak{F}\cong 1_{\mathfrak{BD}}$ and $\mathfrak{F}\circ \mathfrak{%
spec}\cong 1_{\mathfrak{D}_{p}}.$ The restrictions of these functors to the
categories $\mathfrak{D}_{0}^{1}$ and $\mathfrak{S}$ are also
co-equivalences.
\end{theorem}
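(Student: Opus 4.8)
The plan is to exhibit explicit natural isomorphisms realizing the two composites as identities; the categorical statement then follows formally, since a contravariant functor admitting a contravariant quasi-inverse is a co-equivalence. For a lattice $L\in\mathfrak{D}_{p}$ I take the candidate $\eta_{L}=d:L\rightarrow\mathfrak{F}(\mathfrak{spec}(L))$ sending $a\mapsto d(a)$, and for a space $X\in\mathfrak{BD}$ I take $\varepsilon_{X}:X\rightarrow\mathfrak{spec}(\mathfrak{F}(X))$ sending a point $x$ to the set $\{A\in\mathfrak{F}(X):x\notin A\}$ of fundamental subsets avoiding $x$. The first task is to check both maps are well defined: that $d(a)$ is genuinely a fundamental subset (it is open-compact by the fact recalled above, and when empty it satisfies clause ii) of the definition), and that $\varepsilon_{X}(x)$ is genuinely a prime ideal of $\mathfrak{F}(X)$, which is a routine verification since $A\wedge B\in\varepsilon_{X}(x)$ means $x\notin A\cap B$, forcing $x\notin A$ or $x\notin B$.

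Next I would establish $\mathfrak{F}\circ\mathfrak{spec}\cong 1_{\mathfrak{D}_{p}}$ by showing each $\eta_{L}=d$ is a lattice isomorphism. Injectivity and the homomorphism property are already recorded in the preliminaries, using distributivity of $L$. The content is surjectivity: every fundamental subset of $\mathfrak{spec}(L)$ has the form $d(a)$. For a non-empty open-compact $F$, writing $F$ as a union of basic opens $d(a_{i})$ and invoking compactness of $F$ yields a finite subcover, whence $F=d(a_{1})\cup\cdots\cup d(a_{n})=d(a_{1}\vee\cdots\vee a_{n})$ since $d$ preserves joins; the empty fundamental set is dealt with separately through clause ii). I would also check that $d^{-1}$ of a prime ideal of $\mathfrak{F}(\mathfrak{spec}(L))$ is prime, so that $\eta_{L}$ is genuinely a proper morphism in $\mathfrak{D}_{p}$.

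The reverse isomorphism $\mathfrak{spec}\circ\mathfrak{F}\cong 1_{\mathfrak{BD}}$ is where I expect the real work, namely showing $\varepsilon_{X}$ is a homeomorphism. Injectivity follows from the $T_{0}$ property; the crux is surjectivity, that every prime ideal $P$ of $\mathfrak{F}(X)$ equals $\varepsilon_{X}(x)$ for some $x$. This is the only place the defining condition of a Balbes-Dwinger space is essential: taking $\mathcal{A}$ to be the open-compact sets outside $P$ and $\mathcal{B}$ those inside $P$, primeness of $P$ rules out any finite inclusion $\bigcap_{A\in\mathcal{A}_{1}}A\subseteq\bigcup_{B\in\mathcal{B}_{1}}B$, so the characterization theorem forbids $\bigcap_{A\in\mathcal{A}}A\subseteq\bigcup_{B\in\mathcal{B}}B$ and produces a point $x$ in $\bigcap_{A\in\mathcal{A}}A$ but in no $B\in\mathcal{B}$, which one verifies satisfies $\varepsilon_{X}(x)=P$. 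That $\varepsilon_{X}$ is simultaneously continuous and open onto its image, and strongly continuous, then reduces to the single identity $\varepsilon_{X}^{-1}(d(A))=A$ for each fundamental $A$, which matches the two bases directly.

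Finally I would verify naturality of $\eta$ and $\varepsilon$ in the respective variables, which are diagram chases using $\mathfrak{spec}(\alpha)=\alpha^{\ast}$ and $\mathfrak{F}(f)=f^{\ast}$, and that the constructions restrict correctly: a distributive lattice is bounded exactly when its spectrum is a spectral space, so $\eta$ and $\varepsilon$ cut down to natural isomorphisms between $\mathfrak{D}_{0}^{1}$ and $\mathfrak{S}$, giving the last assertion. The main obstacle throughout is the surjectivity of $\varepsilon_{X}$, as it is the genuinely non-trivial half of the duality and the step that consumes the full strength of the Balbes-Dwinger characterization rather than elementary lattice manipulations.
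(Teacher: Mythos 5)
The paper itself gives no proof of this theorem: it is quoted from \cite{Acosta} as an extension of a result in \cite{Balbes-D}. However, the paper reconstructs the same machinery you use, in greater generality, later on: your $\varepsilon_{X}$ is exactly the map $h_{X}$ of Section 3, where Proposition \ref{hx es fuertemente continua} proves your identity $\varepsilon_{X}^{\ast}(d(A))=A$, the next proposition proves injectivity $\Leftrightarrow T_{0}$, and Proposition \ref{h sobre ssi X casi-sobrio} proves surjectivity; and your $\eta_{L}=d$ is the proper homomorphism $d:M\rightarrow \mathfrak{F}(\mathfrak{spec}(M))$ invoked in the adjunction proof of Section 6, with the same naturality chases. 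So in outline you have correctly reconstructed the standard unit/counit proof that the citation points to.

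One step fails as literally written: the surjectivity of $\varepsilon_{X}$ when the prime ideal is $P=\{\emptyset\}$ (which occurs exactly when $\emptyset$ is fundamental). In that case your $\mathcal{B}$, the open-compact sets inside $P$, contains no \emph{non-empty} open-compact set, while the characterization theorem you invoke requires both $\mathcal{A}$ and $\mathcal{B}$ to be non-empty collections of non-empty open-compact sets; so the theorem simply does not apply. The repair is to argue from clause ii) of fundamentality directly: primeness of $P=\{\emptyset\}$ forces every finite intersection of non-empty open-compacts to be non-empty, whence clause ii) yields a point $x\in\bigcap\left(\mathfrak{F}(X)-\{\emptyset\}\right)$, and $\varepsilon_{X}(x)=\{\emptyset\}=P$. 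This is precisely Case 1 in the paper's proof of Proposition \ref{h sobre ssi X casi-sobrio}, and in the main case you should take $\mathcal{B}=P-\{\emptyset\}$, noting it is non-empty. A smaller remark of the same flavor: your phrase ``the empty fundamental set is dealt with separately through clause ii)'' in the surjectivity of $d$ conceals a real verification, namely that $\emptyset$ fundamental in $\mathfrak{spec}(L)$ forces $L$ to have a minimum --- apply clause ii) to the family of all non-empty $d(a)$, whose total intersection is empty because prime ideals are non-empty, to obtain $d(a_{1}\wedge\cdots\wedge a_{n})=\emptyset$ and hence a least element (this is Proposition 5.8 of \cite{Acosta}, cited in Section 6). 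With these boundary cases patched, your proof is complete and follows the same route as the paper's sources.
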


In particular, we have that for every distributive lattice $L,$ $\mathfrak{F}%
\left( \mathfrak{spec}\left( L\right) \right) $ is isomorphic to $L$ and,
for every Balbes-Dwinger space $X,$ $\mathfrak{spec}\left( \mathfrak{F}%
\left( X\right) \right) $ is homeomorphic to $X.$

\section{RA-spaces}

We introduce here the notion of RA-space. For each RA-space $X$ we define a
map $h_{X}$ which allows us to characterize some topological properties of $%
X.$ This family of maps will become a natural transformation in Section 6
below.

\begin{definition}
We say that a topological space $X$ is an \textbf{RA-space} if $X$ is
coherent and $\mathfrak{F}(X)$ is a sub-lattice of $\mathcal{\wp }(X).$
\end{definition}

\noindent Notice that $\mathfrak{F}(X)$ is not a sub-lattice of $\mathcal{%
\wp }(X)$ if, and only if, $\emptyset $ is not fundamental and there exist
two non-empty open-compact disjoint sets.\bigskip

\noindent From now on, $X$ will be always an RA-space.

We know that $\mathfrak{spec}\left( \mathfrak{F}\left( X\right) \right) $ is
a Balbes-Dwinger space, hence, by Theorem \ref{spec es equivalencia}, $%
\mathfrak{F}(\mathfrak{spec}\left( \mathfrak{F}(X)\right) )\cong \mathfrak{F}%
(X)$ and thus, $\mathfrak{spec}\left( \mathfrak{F}\left( X\right) \right) $
is an RA-space.

The proof of the following proposition is straightforward:

\begin{proposition}
For each $x\in X$ the set $\left\{ F\in \mathfrak{F}\left( X\right) :x\notin
F\right\} $ is a prime ideal of $\mathfrak{F}\left( X\right) .$
\end{proposition}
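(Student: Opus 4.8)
The plan is to verify directly the four defining conditions of a prime ideal for the set $P_x=\{F\in\mathfrak{F}(X):x\notin F\}$, using crucially that, since $X$ is an RA-space, $\mathfrak{F}(X)$ is a sub-lattice of $\wp(X)$, so that its join and meet are exactly the union and intersection of sets. This lets me translate each lattice-theoretic requirement into an elementary statement about the membership relation $x\in\,\cdot\,$.

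The set-theoretic conditions are immediate. First, $P_x$ is a lower set: if $F\in P_x$ and $G\in\mathfrak{F}(X)$ with $G\subseteq F$, then $x\notin F$ forces $x\notin G$, so $G\in P_x$. Next, $P_x$ is closed under finite joins: for $F,G\in P_x$ the set $F\vee G=F\cup G$ again lies in $\mathfrak{F}(X)$ because $\mathfrak{F}(X)$ is a sub-lattice, and $x\notin F\cup G$. Primeness is equally direct: since $F\wedge G=F\cap G$, the hypothesis $F\cap G\in P_x$ says $x\notin F\cap G$, whence $x\notin F$ or $x\notin G$, i.e.\ $F\in P_x$ or $G\in P_x$. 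For properness I invoke coherence of $X$: the open-compact sets form a basis, so some basic open-compact $B$ satisfies $x\in B$; such $B$ is non-empty and therefore fundamental, giving $B\in\mathfrak{F}(X)\setminus P_x$ and hence $P_x\neq\mathfrak{F}(X)$.

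The one point requiring care---and the only real obstacle---is that $P_x$ must be shown non-empty, since $\mathfrak{F}(X)$ need not contain $\emptyset$ and need not possess a least element, so I cannot simply cite $\emptyset\in P_x$. I would argue by contradiction. Suppose $P_x=\emptyset$. Then $\emptyset$ is not fundamental (otherwise $\emptyset\in P_x$), and every non-empty open-compact set contains $x$ (otherwise such a set would belong to $P_x$). Consequently every non-empty family of non-empty open-compact sets has intersection containing $x$, and is therefore non-empty; that is, no such family has empty intersection. But then condition (ii) in the definition of fundamental holds vacuously, so $\emptyset$ is fundamental, contradicting the previous sentence.

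Hence $P_x\neq\emptyset$, and together with the lower-set, join-closure, properness and primeness properties established above, $P_x$ is a prime ideal of $\mathfrak{F}(X)$. I expect the verification to be routine apart from the non-emptiness argument, whose subtlety is precisely the possible absence of $\emptyset$ from $\mathfrak{F}(X)$; the resolution is the observation that a point lying in every non-empty open-compact set would force $\emptyset$ to be fundamental.
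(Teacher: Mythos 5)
Your proof is correct. The paper omits the argument entirely (it labels the proposition straightforward and gives no proof), and your direct verification of the ideal axioms is exactly the intended route; your extra care over the non-emptiness of $\left\{ F\in \mathfrak{F}(X):x\notin F\right\}$ --- noting that if $\emptyset$ were not fundamental and every non-empty open-compact set contained $x$, then clause (ii) of the definition of fundamental would hold vacuously and force $\emptyset \in \mathfrak{F}(X)$, a contradiction --- correctly settles the one point where the ``straightforward'' proof has genuine content, namely the possible absence of $\emptyset$ (or of any least element) from $\mathfrak{F}(X)$.
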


Hence, we have a map 
\begin{equation*}
h_{X}:X\rightarrow \mathfrak{spec}\left( \mathfrak{F}\left( X\right) \right)
:x\mapsto \left\{ F\in \mathfrak{F}\left( X\right) :x\notin F\right\} .
\end{equation*}

\begin{proposition}
\label{hx es fuertemente continua}$h_{X}$ is a strongly continuous and open
on its image function.
\end{proposition}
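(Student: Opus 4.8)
The plan is to reduce everything to one elementary computation: for each fundamental set $F\in\mathfrak{F}(X)$, the preimage under $h_X$ of the basic open set $d(F)$ of $\mathfrak{spec}(\mathfrak{F}(X))$ is exactly $F$. Unwinding the definitions, $x\in h_X^{\ast}(d(F))$ means $F\notin h_X(x)=\{G\in\mathfrak{F}(X):x\notin G\}$, which is equivalent to $x\in F$; hence $h_X^{\ast}(d(F))=F$. Since the sets $d(F)$ with $F\in\mathfrak{F}(X)$ form a basis of $\mathfrak{spec}(\mathfrak{F}(X))$ and each $F$ is open in $X$ (being either open-compact or empty), this already gives continuity of $h_X$.

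For strong continuity I first describe the fundamental subsets of the target. Since $\mathfrak{spec}(\mathfrak{F}(X))$ is a Balbes-Dwinger space and the basic open sets $d(F)$ are closed under finite intersection and finite union (because $d$ is a lattice homomorphism and $\mathfrak{F}(X)$ is a sub-lattice of $\wp(X)$, where join is union), every non-empty open-compact subset of $\mathfrak{spec}(\mathfrak{F}(X))$ is a finite union of basic opens and hence of the form $d(F)$ for some $F\in\mathfrak{F}(X)$. By the computation above, the preimage of such a set is $F$, which is fundamental in $X$. Thus the inverse image of every non-empty fundamental subset of the target is fundamental, and combined with the previous paragraph this yields strong continuity, provided the empty fundamental set is handled.

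The one delicate point is the empty set, and I expect it to be the main obstacle: I must check that if $\emptyset$ is fundamental in $\mathfrak{spec}(\mathfrak{F}(X))$ then $\emptyset=h_X^{\ast}(\emptyset)$ is fundamental in $X$, yet prime ideals of $\mathfrak{F}(X)$ need not come from points of $X$, so a direct comparison of intersections fails. The clean route is to observe that, for any space $Z$, the empty set is fundamental in $Z$ exactly when the lattice $\mathfrak{F}(Z)$ has a least element: if a non-empty least open-compact set existed, every family of non-empty open-compact sets would have non-empty intersection, so the defining condition for $\emptyset$ would hold vacuously and $\emptyset$ would itself be fundamental, forcing the least element to be $\emptyset$. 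The existence of a least element is preserved under lattice isomorphism, so the isomorphism $\mathfrak{F}(\mathfrak{spec}(\mathfrak{F}(X)))\cong\mathfrak{F}(X)$ coming from Theorem \ref{spec es equivalencia} transports this property between the two spaces, settling the case.

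Finally, for openness onto the image I dualize the key computation. For a non-empty open-compact $F\subseteq X$ one checks $h_X(F)=d(F)\cap h_X(X)$: a point $h_X(x)$ lies in $d(F)$ iff $x\in F$, so the image of $F$ is precisely the trace of $d(F)$ on $h_X(X)$, which is open in the subspace topology. Since the open-compact sets form a basis of $X$ and images commute with unions, $h_X$ carries arbitrary open sets of $X$ to open subsets of $h_X(X)$, which shows that $h_X$ is open on its image.
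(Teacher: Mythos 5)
Your proposal is correct and takes essentially the same route as the paper's proof: the central computation $h_{X}^{\ast}\left( d\left( F\right) \right) =F$, followed by an appeal to the isomorphism $\mathfrak{F}(\mathfrak{spec}\left( \mathfrak{F}(X)\right) )\cong \mathfrak{F}(X)$ of Theorem \ref{spec es equivalencia}. You merely spell out what the paper compresses into that isomorphism, namely that the open-compact subsets of the spectrum are exactly the sets $d(F)$, the least-element bookkeeping for the empty fundamental set, and the identity $h_{X}(F)=d(F)\cap h_{X}(X)$ giving openness on the image.
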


\begin{proof}
\ \ \ 

Take $F\in \mathfrak{F}\left( X\right) -\left\{ \emptyset
\right\} .$

$%
\begin{array}{ll}
x\in \left( h_{X}\right) ^{\ast }\left( d\left( F\right) \right) & 
\Leftrightarrow h_{X}\left( x\right) \in d\left( F\right) \\ 
& \Leftrightarrow F\notin h_{X}\left( x\right) \\ 
& \Leftrightarrow x\in F.%
\end{array}%
$

Thus $\left( h_{X}\right) ^{\ast }\left( d\left( F\right) \right) =F.$ As $%
\mathfrak{F}(\mathfrak{spec}\left( \mathfrak{F}(X)\right) )\cong \mathfrak{F}(X)$, $h_{X}$ is strongly
continuous and open over its image. \qedhere 
\end{proof}

\begin{proposition}
$h_{X}$ is injective if and only if $X$ is $T_{0}.$
\end{proposition}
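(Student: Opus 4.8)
The plan is to prove both directions of the biconditional, using the explicit description of $h_X$ together with the computation $(h_X)^{\ast}(d(F)) = F$ established in the previous proposition.

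First I would recall that $h_X$ is injective exactly when distinct points $x,y \in X$ always produce distinct prime ideals $h_X(x) = \{F \in \mathfrak{F}(X) : x \notin F\}$ and $h_X(y) = \{F \in \mathfrak{F}(X) : y \notin F\}$. So injectivity is equivalent to the statement: for all $x \neq y$, there exists a fundamental set $F$ that contains exactly one of $x,y$. Since the non-empty fundamental sets are precisely the non-empty open-compact sets, and these form a basis (as $X$ is coherent), I would reduce the separating condition to open sets. The key translation is: $F \in h_X(x) \setminus h_X(y)$ means $x \notin F$ and $y \in F$, i.e. $F$ is a fundamental (hence open-compact) set containing $y$ but not $x$.

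For the direction ($\Leftarrow$): assume $X$ is $T_0$. Given $x \neq y$, the $T_0$ axiom yields an open set $U$ containing one but not the other, say $y \in U$, $x \notin U$. Since $X$ is coherent, it has a basis of open-compact sets, so I can shrink $U$ to a basic open-compact set $B$ with $y \in B \subseteq U$, and then $x \notin B$. A non-empty open-compact set is fundamental, so $B \in \mathfrak{F}(X)$ separates the two prime ideals, giving $h_X(x) \neq h_X(y)$. For the direction ($\Rightarrow$): assume $h_X$ is injective. Given $x \neq y$, we get $h_X(x) \neq h_X(y)$, so there is a fundamental set $F$ lying in exactly one of the two prime ideals; without loss of generality $F \in h_X(x) \setminus h_X(y)$, meaning $x \notin F$ and $y \in F$. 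Such an $F$ cannot be empty (the empty set, if fundamental, lies in every $h_X(x)$), so $F$ is a non-empty open-compact set, in particular an open set separating $x$ and $y$, which is exactly the $T_0$ condition.

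The argument is essentially a routine unwinding of definitions, so there is no serious obstacle; the only point requiring a little care is the passage from an arbitrary open separating set to an open-compact (hence fundamental) one. This is where coherence is used, and I would make sure to invoke it explicitly: coherence guarantees a basis of open-compact sets, which lets me replace the separating open set by a basic one while preserving the separation. I would also note that the empty fundamental set never separates points, which is what lets the injectivity hypothesis deliver a genuinely non-empty (and thus open-compact) separating set in the forward direction.
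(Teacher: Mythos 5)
Your proof is correct and is essentially the paper's argument unwound: the paper compresses everything into the remark that, by coherence, $h_{X}(x)=h_{X}(y)$ is equivalent to $\overline{\{x\}}=\overline{\{y\}}$, and your two directions are precisely this equivalence spelled out via separation by basic open-compact (i.e.\ non-empty fundamental) sets. The points you flag --- using coherence to shrink a separating open set to an open-compact one, and noting that the empty fundamental set never separates --- are exactly the content hidden in the paper's one-line proof.
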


\begin{proof}
\ \ 
It is enough to remark that, since $X$ is coherent, $h_{X}\left( x\right) =h_{X}\left( y\right)$
is equivalent to  $\overline{\left\{ x\right\} }=\overline{\left\{ y\right\} } $. \QED 

\end{proof}

\begin{proposition}
\label{h sobre ssi X casi-sobrio}$h_{X}$ is surjective if and only if $X$ is
almost-sober.
\end{proposition}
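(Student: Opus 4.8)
The plan is to convert surjectivity of $h_X$ into a statement about closures of points through an explicit correspondence between prime ideals of $\mathfrak{F}(X)$ and nonempty irreducible closed subsets of $X$. Given a prime ideal $P$ of $\mathfrak{F}(X)$ I would put $U_P=\bigcup_{F\in P}F$ and $C_P=X\setminus U_P$. The technical heart is the identity $F\in P\Leftrightarrow F\subseteq U_P$, valid for every $F\in\mathfrak{F}(X)$: the nontrivial implication uses that each fundamental set is compact, so a covering $F\subseteq\bigcup_{G\in P}G$ admits a finite subcover $F\subseteq G_1\cup\cdots\cup G_n$; since $\mathfrak{F}(X)$ is a sublattice of $\wp(X)$ we have $G_1\cup\cdots\cup G_n\in P$, and since $P$ is a lower set we conclude $F\in P$. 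Because the nonempty fundamental sets form a basis of $X$, one gets $\overline{\{x\}}=X\setminus\bigcup\{F\in\mathfrak{F}(X):x\notin F\}=C_{h_X(x)}$ for every $x\in X$; combined with the injectivity of $P\mapsto U_P$ (immediate from $P=\{F:F\subseteq U_P\}$) this yields the key equivalence $h_X(x)=P\Leftrightarrow\overline{\{x\}}=C_P$. Thus $h_X$ is surjective precisely when every $C_P$ is the closure of a point.

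Next I would check that for a prime $P$ the set $C_P$ is a nonempty irreducible closed set. It is nonempty because $U_P=X$ would force every fundamental set into $P$ (by the equivalence above), contradicting $P\neq\mathfrak{F}(X)$. For irreducibility I argue on the complement: if $V,W$ are open with $V\cap W\subseteq U_P$ but $V\not\subseteq U_P$ and $W\not\subseteq U_P$, I pick basic fundamental sets $F\subseteq V$ and $G\subseteq W$ meeting $C_P$, so that $F,G\notin P$, while $F\cap G\in\mathfrak{F}(X)$ satisfies $F\cap G\subseteq U_P$ and hence $F\cap G\in P$, contradicting primeness of $P$. For the forward direction I then assume $h_X$ surjective and let $C$ be any proper irreducible closed set. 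Setting $P=\{F\in\mathfrak{F}(X):F\cap C=\emptyset\}$, one verifies directly that $P$ is an ideal (using that $\mathfrak{F}(X)$ is a sublattice), that it is proper and nonempty, and that it is prime exactly because $C$ is irreducible; since the fundamental sets form a basis, $U_P=X\setminus C$ and therefore $C_P=C$. Surjectivity then produces $x$ with $h_X(x)=P$, and the key equivalence gives $\overline{\{x\}}=C$, so $X$ is almost-sober.

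For the converse, assume $X$ is almost-sober and let $P$ be a prime ideal; by the above $C_P$ is a nonempty irreducible closed set, and I must produce a generic point for it. When $C_P$ is proper this is exactly the almost-sober hypothesis, and the key equivalence identifies the generic point as the desired preimage of $P$. The main obstacle is the remaining case $C_P=X$, where almost-soberness says nothing: here $U_P=\emptyset$ forces $P=\{\emptyset\}$, so $\emptyset$ is fundamental and, since $\{\emptyset\}$ is prime, no two nonempty open-compact sets are disjoint, i.e. $X$ is irreducible. I would then apply condition (ii) in the definition of fundamental to the family of all nonempty open-compact sets, which has the finite intersection property precisely because $X$ is irreducible and nonempty; this yields a point lying in every nonempty open-compact set, hence a generic point $x$ of $X$, with $h_X(x)=\{\emptyset\}=P$. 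This edge case is the one place where the RA-space hypothesis on $\emptyset$, rather than almost-soberness, does the work.
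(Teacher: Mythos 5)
Your proof is correct and follows essentially the same route as the paper's: both hinge on the correspondence $P\mapsto U_P=\bigcup_{F\in P}F$ between prime ideals of $\mathfrak{F}(X)$ and prime open sets, with compactness of fundamental sets giving $F\subseteq U_P\Rightarrow F\in P$, almost-soberness supplying generic points for the proper irreducible closed sets, and condition (ii) in the definition of fundamental handling the edge case $P=\{\emptyset\}$ (the paper's Case 1). Your packaging of both directions through the single equivalence $h_X(x)=P\Leftrightarrow\overline{\{x\}}=C_P$ is merely a tidier organization of the same computations the paper carries out separately in each direction.
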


\begin{proof}
\ \ 

\begin{enumerate}
\item Suppose that $h_{X}$ is surjective and we have to see that $X$ is
almost-sober. Let $G$ be a proper irreducible closed set of $X;$ by
definition $G\neq \emptyset .$

\noindent We call $A=X-G.$ We have that $A\neq \emptyset ,$ $A\neq X$
and $A$ is a prime open set of $X.$

\noindent Define $\mathfrak{I}=\left\{ F\in \mathfrak{F}\left( X\right)
:F\subseteq A\right\} .$ As $X$ is coherent, $\mathfrak{I}\neq
\emptyset $ because $A\neq \emptyset $ and $\mathfrak{I}\neq \mathfrak{F}%
\left( X\right) $ given that $A\neq X.$ Since $A$ is a prime open set, $%
\mathfrak{I}$ is a prime ideal of $\mathfrak{F}\left( X\right) ;$ thus, by
the hypothesis, there exists $x\in X$ such that $h_{X}\left( x\right) =%
\mathfrak{I}.$

\noindent We have to see that $G=\overline{\left\{ x\right\} }$:

\noindent $\subseteq :$ If $y\notin \overline{\left\{ x\right\} }$ then
there exists $F\in \mathfrak{F}\left( X\right) $ such that $y\in F$ and $%
x\notin F,$ thus $y\in F$ and $F\in h_{X}\left( x\right) =\mathfrak{I}.$
Therefore, $y\in A$ and $y\notin G.$

\noindent $\supseteq :$ If $y\notin G$ then $y\in A$ and therefore, there
exists $F\in \mathfrak{I}$ such that $y\in F,$ because $X$ is coherent.
Thus, $F\in h_{X}\left( x\right) $ so that $x\notin F.$ Hence, $y\notin 
\overline{\left\{ x\right\} }.$

\item Suppose that $X$ is almost-sober. We have to see that $h_{X}$ is
surjective. Consider $\mathfrak{I}\in \mathfrak{spec}(\mathfrak{F}\left( X\right)) .$

\begin{description}
\item[Case 1:] $\mathfrak{I}=\left\{ \emptyset \right\} .$ We have that $%
\emptyset $ is fundamental. As $\mathfrak{I}$ is a prime ideal,  every
finite intersection of elements of $\mathfrak{F}\left( X\right) -\left\{
\emptyset \right\} $ is non-empty and therefore, $\bigcap\left(
\mathfrak{F}(Z)-\{\emptyset \}\right)\neq \emptyset.$ For each $x\in
\bigcap\left(\mathfrak{F}(Z)-\{\emptyset \}\right)$  we have $h_{X}\left(
x\right) =\mathfrak{I}.$

\item[Case 2:] $\mathfrak{I}\neq \left\{ \emptyset \right\} .$ Define $%
A=\bigcup \mathfrak{I}=\underset{F\in \mathfrak{I}}{\bigcup }F.$ We have
that $A\neq \emptyset .$ If $A=X,$ consider $H\in \mathfrak{F}\left(
X\right) ,$ then $H\subseteq \underset{F\in \mathfrak{I}}{\bigcup }F$ and as 
$H$ is compact, there exist $F_{1},...,F_{n}\in \mathfrak{I}$ such that $%
H\subseteq F_{1}\cup ...\cup F_{n}$ and $F_{1}\cup ...\cup F_{n}\in 
\mathfrak{I},$ so that $H\in \mathfrak{I}.$ Therefore, $\mathfrak{F}\left(
X\right) =\mathfrak{I}$ which contradicts that $\mathfrak{I}$ is a prime
ideal.

\item \noindent We have to show that $A$ is a prime open set. In fact, let $B,C$
be open sets such that $B\cap C\subseteq A.$

\item \noindent As $X$ is coherent, $B=\underset{i}{\bigcup }H_{i}$ and $%
C=\underset{j}{\bigcup }G_{j},$ where $H_{i},G_{j}\in \mathfrak{F}\left(
X\right) $ for each $i$ and each $j.$ Thus,%
\begin{equation*}
B\cap C=\left( \underset{i}{\bigcup }H_{i}\right) \cap \left( \underset{j}{%
\bigcup }G_{j}\right) =\underset{j}{\bigcup }\left( \underset{i}{\bigcup }%
\left( H_{i}\cap G_{j}\right) \right) \subseteq A,
\end{equation*}%
so $H_{i}\cap G_{j}\subseteq A$ for each $i$ and each $j.$ As $H_{i}\cap
G_{j}$ is compact, there exist $F_{1},...,F_{n}\in \mathfrak{I}$ such that $%
H_{i}\cap G_{j}\subseteq F_{1}\cup ...\cup F_{n}$ and $F_{1}\cup ...\cup
F_{n}\in \mathfrak{I},$ then $H_{i}\cap G_{j}\in \mathfrak{I},$ for each $i$
and each $j.$ As $\mathfrak{I}$ is prime, $H_{i}\in \mathfrak{I}$ or $%
G_{j}\in \mathfrak{I},$ for each $i$ and each $j.$ Suppose that $%
G_{j_{0}}\notin \mathfrak{I},$ then $H_{i}\cap G_{j_{0}}\in \mathfrak{I}$
for each $i$ and then, $H_{i}\in \mathfrak{I}$ for every $i;$ thus, $%
B\subseteq A$. Similarly, if $H_{i_{0}}\notin \mathfrak{I}$, we have that  $C\subseteq A.$ We conclude that $G=X-A$ is a proper
irreducible closed and therefore there exists $x\in X$ such that $G=%
\overline{\left\{ x\right\} }.$%
\begin{equation*}
F\in h_{X}(x)\Leftrightarrow x\notin F\Leftrightarrow \overline{\{x\}}\cap
F=\emptyset \Leftrightarrow G\cap F=\emptyset \Leftrightarrow F\subseteq
A\Leftrightarrow F\in \mathfrak{I}.
\end{equation*}%
Hence, $h_{X}\left( x\right) =\mathfrak{I}.$ \qedhere
\end{description} 
\end{enumerate}
\end{proof}

\begin{corollary}
\label{h es homeo sii X es F-esp, To, casi-sobr}$h_{X}$ is a homeomorphism
if and only if $X$ is $T_{0}$ and almost-sober.
\end{corollary}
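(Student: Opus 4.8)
The plan is to assemble the three immediately preceding propositions, since together they isolate exactly the two conditions named in the statement. Recall that a homeomorphism is precisely a continuous bijection whose inverse is also continuous, equivalently a continuous and open bijection. Proposition \ref{hx es fuertemente continua} already supplies continuity (strong continuity includes ordinary continuity) together with the fact that $h_{X}$ is open onto its image, so the only ingredients still to be secured are injectivity and surjectivity, and these are governed precisely by the $T_{0}$ and almost-sober hypotheses.

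For the forward implication, I would assume that $X$ is $T_{0}$ and almost-sober. By the preceding proposition characterizing injectivity, the $T_{0}$ hypothesis yields that $h_{X}$ is injective; by Proposition \ref{h sobre ssi X casi-sobrio}, almost-soberness yields that $h_{X}$ is surjective. Hence $h_{X}$ is a bijection. Since $h_{X}$ is open onto its image and that image is now all of $\mathfrak{spec}\left( \mathfrak{F}\left( X\right) \right)$, the map $h_{X}$ is an open map; a continuous, open bijection is a homeomorphism, which settles this direction.

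For the converse, if $h_{X}$ is a homeomorphism then it is in particular a bijection, hence both injective and surjective. Injectivity forces $X$ to be $T_{0}$ by the injectivity proposition, and surjectivity forces $X$ to be almost-sober by Proposition \ref{h sobre ssi X casi-sobrio}.

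Since every step reduces to invoking results already established, there is essentially no real obstacle here. The only point demanding a moment's care is the bookkeeping observation that \textquotedblleft open onto its image\textquotedblright\ upgrades to \textquotedblleft open map\textquotedblright\ exactly once surjectivity is in hand, so that continuity of the inverse $h_{X}^{-1}$ is guaranteed and the bijection genuinely qualifies as a homeomorphism.
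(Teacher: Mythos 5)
Your proof is correct and is exactly the intended argument: the paper states this as an immediate corollary of the three preceding propositions (strong continuity and openness onto the image, injectivity $\Leftrightarrow$ $T_{0}$, surjectivity $\Leftrightarrow$ almost-sober), and you assemble them in the same way, including the right observation that openness onto the image upgrades to openness once surjectivity holds.
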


\section{Almost-spectral spaces}

\noindent In this section we characterize almost-spectral spaces, and show,
among other things, that they are precisely the sober Balbes-Dwinger spaces.

\begin{lemma}
\label{Lema}If $f:X\rightarrow Y$ is continuous and $F$ is an irreducible
closed set of $X$ then $\overline{f(F)}$ is an irreducible closed set of $Y.$
\end{lemma}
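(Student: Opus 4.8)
The plan is to check the two defining requirements for $\overline{f(F)}$ to be an irreducible closed set: that it is a non-empty closed set, and that it admits no proper decomposition as a union of two closed sets. The first requirement is immediate. Since $F\neq\emptyset$ by the definition of irreducible closed set, we have $f(F)\neq\emptyset$, hence $\overline{f(F)}\neq\emptyset$; and $\overline{f(F)}$ is closed simply because it is a closure. So the entire content of the lemma lies in the irreducibility condition.

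For the irreducibility condition, I would suppose $\overline{f(F)}=G\cup H$ with $G$ and $H$ closed in $Y$, and transport this splitting back to $F$ by continuity. Setting $G'=F\cap f^{\ast}(G)$ and $H'=F\cap f^{\ast}(H)$, both are closed in $X$ (each is an intersection of closed sets, using that $f$ is continuous and $F$ is closed) and both are contained in $F$. The key observation is that every $x\in F$ satisfies $f(x)\in f(F)\subseteq\overline{f(F)}=G\cup H$, so $x\in f^{\ast}(G)\cup f^{\ast}(H)$; combined with $x\in F$ this gives $x\in G'\cup H'$. Hence $F=G'\cup H'$.

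Now I would invoke the irreducibility of $F$: from $F=G'\cup H'$ we get $F=G'$ or $F=H'$. In the first case $F\subseteq f^{\ast}(G)$, so $f(F)\subseteq G$, and since $G$ is closed this yields $\overline{f(F)}\subseteq G$. The only point requiring a moment's care — and the one I would flag as the single genuine step of the argument — is securing equality rather than mere containment: because $G\subseteq G\cup H=\overline{f(F)}$, the reverse inclusion holds automatically, so $\overline{f(F)}=G$. The case $F=H'$ is entirely symmetric and gives $\overline{f(F)}=H$. In either case $\overline{f(F)}$ cannot be split properly, so it is irreducible, which completes the proof.
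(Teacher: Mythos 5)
Your proposal is correct and follows essentially the same route as the paper: both decompose $F=\bigl(F\cap f^{\ast}(G)\bigr)\cup\bigl(F\cap f^{\ast}(H)\bigr)$, apply the irreducibility of $F$, and push the resulting containment $f(F)\subseteq G$ (or $H$) through the closure. Your extra remarks --- verifying non-emptiness and noting that the reverse inclusion $G\subseteq\overline{f(F)}$ upgrades containment to equality --- are just the details the paper leaves implicit in its definition of irreducibility.
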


\begin{proof}
Let $H$ and $K$ be two closed sets of $Y$ such that $\overline{f(F)}=H\cup K.
$ We have that $F=\left( F\cap f^{-1}(H)\right) \cup \left( F\cap
f^{-1}(K)\right) $ and as $F$ is irreducible, then $F\subseteq f^{-1}(H)$ or 
$F\subseteq f^{-1}(K).$ Hence $f(F)\subseteq H$ or $f(F)\subseteq K$ and
thus, $\overline{f(F)}\subseteq H$ or $\overline{f(F)}\subseteq K$.
Therefore $\overline{f(F)}$ is irreducible.
\end{proof}

This Lemma follows immediately if we work in terms of localic maps or frame
homomorphisms (see \cite{Johnstone}).

\begin{proposition}
If $X$ is a sober space and $Z$ is an open subspace of $X$ then $Z$ is sober.
\end{proposition}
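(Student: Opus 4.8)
The goal is to show that an open subspace $Z$ of a sober space $X$ is itself sober. Sobriety means every irreducible closed set is the closure of a unique point, so there are two things to establish: existence of a generic point, and its uniqueness.

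The plan is to exploit the correspondence between irreducible closed sets of $Z$ and certain irreducible closed sets of $X$. Let $F$ be an irreducible closed subset of $Z$. Since $Z$ is an open subspace, closed sets of $Z$ have the form (closed in $X$) $\cap\, Z$; in particular $F = \overline{F}^{X} \cap Z$, where $\overline{F}^{X}$ denotes the closure taken in $X$. First I would verify that $\overline{F}^{X}$ is irreducible in $X$. This is the natural analogue of Lemma \ref{Lema}: if $\overline{F}^{X} = H \cup K$ with $H, K$ closed in $X$, intersecting with $Z$ gives $F = (H \cap Z) \cup (K \cap Z)$, a decomposition into sets closed in $Z$; irreducibility of $F$ forces $F \subseteq H$ or $F \subseteq K$, and taking $X$-closures yields $\overline{F}^{X} \subseteq H$ or $\overline{F}^{X} \subseteq K$. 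Hence $\overline{F}^{X}$ is irreducible and closed in $X$.

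Now sobriety of $X$ supplies a unique point $x \in X$ with $\overline{F}^{X} = \overline{\{x\}}^{X}$. The key claim is that $x$ actually lies in $Z$ and serves as the generic point of $F$ inside $Z$. To see that $x \in Z$: since $F$ is non-empty (being irreducible), pick any $z \in F \subseteq Z$; then $z \in \overline{F}^{X} = \overline{\{x\}}^{X}$, so every open neighbourhood of $z$ contains $x$. In particular the open set $Z$ contains $x$, so $x \in Z$. With $x \in Z$ established, the closure of $\{x\}$ computed in $Z$ is $\overline{\{x\}}^{X} \cap Z = \overline{F}^{X} \cap Z = F$, so $x$ is a generic point of $F$ in $Z$.

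Finally, for uniqueness, suppose $x'$ is another generic point of $F$ in $Z$. Then $\overline{\{x'\}}^{Z} = F$, and since the $X$-closure of a point of $Z$ restricted to $Z$ recovers its $Z$-closure, one checks that $\overline{\{x'\}}^{X} \cap Z = F$; combined with irreducibility this forces $\overline{\{x'\}}^{X} = \overline{F}^{X} = \overline{\{x\}}^{X}$, and uniqueness in $X$ gives $x' = x$. The step I expect to require the most care is pinning down $x \in Z$ and the commutation of closures with the subspace operation; the irreducibility transfer is a direct reprise of Lemma \ref{Lema}, but the argument that the generic point survives into the open set $Z$ is where openness of $Z$ is genuinely used.
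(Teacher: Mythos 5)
Your proof is correct and takes essentially the same route as the paper: pass to $\overline{F}^{X}$, transfer irreducibility (the paper cites Lemma \ref{Lema} applied to the inclusion $i:Z\rightarrow X$, which you re-derive inline), invoke sobriety of $X$, and use openness of $Z$ to show the generic point lands in $Z$. You in fact supply the detail the paper dismisses as ``clear'' (that $x\in F$), and your uniqueness argument via $X$-closures is interchangeable with the paper's appeal to the $T_{0}$ property of $Z$.
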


\begin{proof}
Let $F$ be an irreducible closed set of $Z.$ If $i:Z\rightarrow X$ is the
inclusion function then, by Lemma \ref{Lema}, $\overline{i(F)}^{X}$ is an irreducible closed of $X,
$ where $\overline{i(F)}^{X}$ is the closure of $i(F)$ in $X$. As $X$ is
sober, there exists $x\in X$ such that $\overline{F}^{X}=\overline{i(F)}^{X}=%
\overline{\{x\}}^{X}.$ It is clear that $x\in F$ and hence $\overline{\{x\}}%
^{Z}=F$, because the uniqueness is a consequence ot the $T_{0}$ property of $Z$.
\end{proof}

\begin{proposition}
Every almost-spectral space is sober.
\end{proposition}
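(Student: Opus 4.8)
The plan is to prove that every almost-spectral space is sober by reducing to the unitary case via the compactification result from Theorem~\ref{nilcompactacion}. Recall that an almost-spectral space $X$ is, by definition, homeomorphic to $Spec(S)$ for some commutative (not necessarily unitary) ring $S$. The key observation is that spectral spaces (prime spectra of commutative \emph{unitary} rings) are known to be sober by Hochster's characterization, and soberness passes to open subspaces by the previous proposition.

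First I would invoke Theorem~\ref{nilcompactacion}(ii), which gives a commutative unitary ring $Q(S)$ such that $Spec(S)$ is homeomorphic to an open-dense subspace of $Spec(Q(S))$. Since $Q(S)$ is unitary, $Spec(Q(S))$ is a spectral space and hence sober (by Hochster's topological characterization recalled in the Introduction). Then, because $Spec(S)$ is homeomorphic to an \emph{open} subspace of the sober space $Spec(Q(S))$, the preceding proposition (soberness is inherited by open subspaces) immediately yields that $Spec(S)$ is sober. As $X$ is homeomorphic to $Spec(S)$ and soberness is a topological invariant, $X$ is sober.

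The main conceptual step is the appeal to Theorem~\ref{nilcompactacion}(ii): it is what allows us to embed the possibly non-unitary spectrum as an open subspace of a genuinely spectral space, thereby placing us in the situation handled by the proposition on open subspaces of sober spaces. I expect no serious obstacle here, since both ingredients are already established in the excerpt; the only point requiring care is to confirm that $Spec(Q(S))$ is indeed sober, which follows from its being the spectrum of a unitary ring and thus a spectral space in Hochster's sense. Note that the density of the embedding is not needed for this argument — openness alone suffices to transfer soberness downward.

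Thus the proof is essentially a two-line composition: \emph{unitary spectra are sober}, and \emph{open subspaces of sober spaces are sober}, bridged by the embedding $Spec(S) \hookrightarrow Spec(Q(S))$ supplied by Theorem~\ref{nilcompactacion}(ii).
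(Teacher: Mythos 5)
Your proposal is correct and follows essentially the same route as the paper: the paper's proof likewise applies Theorem~\ref{nilcompactacion} to embed $Spec(S)$ as an open subspace of the sober spectral space $Spec(Q(S))$, and then invokes the preceding proposition that open subspaces of sober spaces are sober. Your added remark that only openness, not density, of the embedding is used is accurate and matches the paper's argument.
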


\begin{proof}
Let $A$ be a commutative ring. We know, by Theorem \ref{nilcompactacion}, that $Spec\left( A\right) $ is an
open subspace of $Spec\left( Q(A)\right) $  and 
$Spec\left( Q(A)\right) $ is sober because it is a spectral space. 
\end{proof}

The following lemma is taken from \cite{Acosta}:

\begin{lemma}
A distributive lattice $L$ has a least element if, and only if, $\mathfrak{%
spec}(L)$ is a sober space.
\end{lemma}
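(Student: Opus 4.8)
The plan is to translate soberness into a statement about prime ideals and then to isolate the single closed set that can obstruct it. First I would record that the closed sets of $\mathfrak{spec}(L)$ are exactly the sets $V(J)=\{I:J\subseteq I\}$ for ideals $J$, that for any $Y\subseteq\mathfrak{spec}(L)$ one has $\overline{Y}=V\left(\bigcap_{I\in Y}I\right)$, and in particular that $\overline{\{P\}}=V(P)$ has $P$ as its generic point. Since $\mathfrak{spec}(L)$ is a Balbes-Dwinger space it is $T_{0}$, so the ``unique point'' clause in soberness is automatic and I only need existence of generic points.

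For the forward implication, suppose $L$ has least element $0$ and let $F$ be an irreducible closed set. I would set $P=\bigcap_{I\in F}I$. Because every ideal is a nonempty lower set it contains $0$, so $P$ is nonempty, and being an intersection of ideals contained in some proper $I\in F$ it is a proper ideal. To see that $P$ is prime, take $a\wedge b\in P$; then every $I\in F$ contains $a\wedge b$ and, being prime, contains $a$ or $b$, so $F=(F\cap V(a))\cup(F\cap V(b))$, where $V(x)$ denotes the complement of $d(x)$. Irreducibility forces $F\subseteq V(a)$ or $F\subseteq V(b)$, that is $a\in P$ or $b\in P$. Hence $P$ is prime and $F=V(P)=\overline{\{P\}}$, so $\mathfrak{spec}(L)$ is sober.

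The converse is the delicate part, and I would argue its contrapositive. The key observation is that $d\colon L\to\wp(\mathfrak{spec}(L))$ is an injective lattice homomorphism (injectivity is exactly distributivity), so if $d(c)=\emptyset$ then $d(c\wedge x)=d(c)\cap d(x)=\emptyset=d(c)$ for every $x$, whence $c\wedge x=c$, i.e. $c\le x$; thus $d(c)=\emptyset$ already forces $c$ to be the least element of $L$. Assuming $L$ has no least element, this gives $d(c)\neq\emptyset$ for every $c$. From this I extract two facts at once. First, $\mathfrak{spec}(L)$ is irreducible: two disjoint nonempty open sets would contain nonempty basic opens $d(a),d(b)$ with $d(a\wedge b)=d(a)\cap d(b)=\emptyset$, forcing $a\wedge b$ to be least, a contradiction. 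Second, $\mathfrak{spec}(L)$ has no generic point: a generic point $x_{0}$ would satisfy $x_{0}\subseteq I$ for all primes $I$, so any $c\in x_{0}$ would lie in every prime, i.e. $d(c)=\emptyset$, again a contradiction. Therefore the whole space is an irreducible closed set that is not the closure of any point, so $\mathfrak{spec}(L)$ is not sober.

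The main obstacle is precisely this converse, and the crux is realizing that the only possible failure of soberness is the whole space itself: every proper irreducible closed set $F$ has $\bigcap_{I\in F}I\neq\emptyset$ (otherwise $\overline{F}$ would be all of $\mathfrak{spec}(L)$), and the primality argument above then produces its generic point regardless of whether $L$ has a bottom. The one computational step that carries the whole argument — that an element sent to $\emptyset$ by $d$ must be a global bottom — is what ties the purely topological condition back to the existence of a least element, and it is where distributivity, hence injectivity of $d$, is used essentially.
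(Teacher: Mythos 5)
Your proof is correct, but there is no in-paper argument to compare it against: the paper imports this lemma without proof (``The following lemma is taken from \cite{Acosta}''), so your writeup supplies a demonstration the paper only cites. Your route is sound and well structured. The reduction of soberness to the single potentially bad closed set $\mathfrak{spec}(L)$ --- every \emph{proper} irreducible closed $F$ has $\bigcap_{I\in F}I\neq\emptyset$, and your primality argument then produces its generic point with no hypothesis on $L$ --- is exactly the phenomenon the paper later records abstractly in its theorem that every Balbes-Dwinger space is almost-sober; and the crux of your converse, that $d(c)=\emptyset$ forces $c$ to be a least element via the injectivity of $d$ (which the paper's preliminaries state is equivalent to distributivity), so that a bottomless $L$ has all $d(c)$ nonempty, making the whole space a nonempty irreducible closed set with no generic point, is precisely where the least-element hypothesis must enter. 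Uniqueness of generic points via $T_{0}$ is also handled correctly. One small imprecision: your opening claim that the closed sets are \emph{exactly} the $V(J)$ with $J$ an ideal fails when $L$ has no least element, for then no ideal (ideals being nonempty lower sets) is contained in every prime ideal, so the whole space is closed but not of the form $V(J)$; the accurate statement is that the \emph{proper} closed sets are the $V(J)$, or equivalently that $\overline{Y}=V\bigl(\bigcap_{I\in Y}I\bigr)$ under the convention $V(\emptyset)=\mathfrak{spec}(L)$ --- which is how you actually use the formula, so nothing downstream is affected.
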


\begin{theorem}
\label{C-esp es B-D sobrio}Every almost-spectral space is a sober
Balbes-Dwinger space.
\end{theorem}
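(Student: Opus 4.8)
The plan is to separate the two assertions. Sobriety of every almost-spectral space is already established by the preceding proposition, so the real content is to show that every almost-spectral space is a Balbes-Dwinger space. Let $X$ be almost-spectral, say $X$ is homeomorphic to $Spec(A)$ for a commutative ring $A$. By Theorem \ref{nilcompactacion}(ii), $Spec(A)$ is homeomorphic to an open subspace of $Spec(Q(A))$, where $Q(A)$ is a commutative \emph{unitary} ring; hence $Spec(Q(A))$ is a spectral space and, in particular, a Balbes-Dwinger space. Thus it suffices to prove the following general fact: every open subspace of a Balbes-Dwinger space is again a Balbes-Dwinger space.

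To prove this fact I would invoke the topological characterization of Balbes-Dwinger spaces recalled in Section 2, that is, I would verify that an open subspace $U$ of a Balbes-Dwinger space $Y$ is $T_{0}$, coherent, and satisfies the containment condition of that characterization. The $T_{0}$ property is inherited by any subspace. For coherence, let $\mathcal{C}$ be a basis of open-compact subsets of $Y$ closed under finite intersection; I claim that $\{C\in\mathcal{C}:C\subseteq U\}$ is a basis of open-compact subsets of $U$ with the same closure property. It is closed under finite intersection since $C_{1},C_{2}\subseteq U$ forces $C_{1}\cap C_{2}\subseteq U$ with $C_{1}\cap C_{2}\in\mathcal{C}$; and it is a basis for $U$ since for any $U$-open set $W$ and any $x\in W$, the openness of $U$ in $Y$ gives some $C\in\mathcal{C}$ with $x\in C\subseteq W$. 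The key observation supporting both this step and the next is that, because $U$ is open in $Y$, the open-compact subsets of $U$ are exactly the open-compact subsets of $Y$ that happen to lie inside $U$.

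It then remains to check the containment condition for $U$. Here I would take non-empty collections $\mathcal{A}$ and $\mathcal{B}$ of non-empty open-compact subsets of $U$ with $\bigcap_{A\in\mathcal{A}}A\subseteq\bigcup_{B\in\mathcal{B}}B$. By the observation above, the members of $\mathcal{A}$ and $\mathcal{B}$ are also non-empty open-compact subsets of $Y$, and the displayed inclusion is purely set-theoretic, hence holds verbatim in $Y$. Applying the condition valid in the Balbes-Dwinger space $Y$ yields finite subcollections $\mathcal{A}_{1}\subseteq\mathcal{A}$ and $\mathcal{B}_{1}\subseteq\mathcal{B}$ with $\bigcap_{A\in\mathcal{A}_{1}}A\subseteq\bigcup_{B\in\mathcal{B}_{1}}B$, which is exactly the condition for $U$. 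Combining the three verifications shows that $U$ is a Balbes-Dwinger space.

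Putting the pieces together, $X\cong Spec(A)$ is homeomorphic to an open subspace of the Balbes-Dwinger space $Spec(Q(A))$, hence is itself a Balbes-Dwinger space; together with the already-proved sobriety this gives the theorem. I expect the main (though modest) obstacle to be the coherence step, where one must confirm that restricting the ambient basis to $U$ still produces a basis closed under finite intersection. The containment condition, by contrast, transfers almost formally once one records that the open-compact subsets of an open subspace coincide with the ambient open-compact sets contained in it, so that the collections and the inclusion are literally unchanged when passed to $Y$.
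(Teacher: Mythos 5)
Your proposal is correct, but it takes a genuinely different route from the paper's. The paper proves the theorem directly, without passing through $Q(A)$: it takes $\mathfrak{F}$ to be the lattice of open-compact subsets of $Spec(A)$ (a distributive lattice with least element $\emptyset$), constructs the map $f(I)=\{B\in \mathfrak{F}:I\notin B\}$ and an inverse $g$ defined via prime open sets and the sobriety of $Spec(A)$, and checks by hand that $f$ is a homeomorphism onto $\mathfrak{spec}(\mathfrak{F})$; sobriety of the latter then follows from the quoted lemma that $\mathfrak{spec}(L)$ is sober if and only if $L$ has a minimum. You instead reduce to the unitary case via Theorem \ref{nilcompactacion}(ii), import sobriety from the preceding proposition (which the paper proves by the same $Q(A)$ device), and supply the new ingredient: that the Balbes-Dwinger property is inherited by open subspaces. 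Your verification of that lemma is sound; the decisive observation that the open-compact subsets of an open subspace are exactly the ambient open-compact sets contained in it does make the coherence and containment-condition checks transfer verbatim, and $T_{0}$ is hereditary. As for what each approach buys: the paper's construction explicitly realizes $Spec(A)$ as the spectrum of a concrete distributive lattice with minimum, which is precisely what feeds statement (v) of Theorem \ref{Charact almost-spectral} later on; your argument is shorter modulo the characterization of \cite{Balbes-D} quoted in Section 2 (which the paper states but does not use in this proof), avoids the hand verification that $f(I)$ is a prime ideal and that $\bigcup_{B\in\mathcal{J}}B$ is a prime open set, and isolates a reusable general fact --- openness-heredity of Balbes-Dwinger spaces --- that the paper never states explicitly but which also yields the implication (iii)$\Rightarrow$(iv) of Theorem \ref{Charact almost-spectral} directly.
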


\begin{proof}
Let $A$ be a commutative ring and let $\mathfrak{F}$ be the (distributive)
lattice of the open-compact sets of $Spec\left( A\right) .$ Since $\mathfrak{F}$ has a least element 
we have that $\mathfrak{spec}\left(\mathfrak{F}\right)$ is a Balbes-Dwinger sober space. We have to see that 
 $\mathfrak{spec}\left(\mathfrak{F}\right)$ and  $Spec\left(A\right)$ are homeomorphic.

\noindent If $I$ is a prime ideal of $A,$ define $f(I)=\{B\in \mathfrak{F}%
:I\notin B\}.$ We have to show that $f(I)$ is a prime ideal of $\mathfrak{F}$:

\noindent It is clear that $\emptyset \in f(I)$. As $I$ is a proper ideal of 
$A,$ there exists $a\in A-I,$ then $I\in D(a)$ which is an open-compact set
of $Spec\left( A\right) .$ Hence $f(I)\neq \mathfrak{F}$. If $B,C\in f\left(
I\right) $ we have that $I\notin B\cup C,$ then $B\cup C\in f\left( I\right)
.$ If $B\in f(I)$ and $C\in \mathfrak{F}$ is such that $C\subseteq B,$ we
have that $I\notin C,$ therefore $C\in f\left( I\right) .$ Consider now $%
B,C\in \mathfrak{F}$ such that $B\cap C\in f\left( I\right) .$ We have that $%
I\notin B\cap C$ then$\ I\notin B$ or $I\notin C,$ thus $B\in f\left(
I\right) $ or $C\in f\left( I\right) .$

\noindent Let $\mathcal{J}$ be a prime ideal of $\mathfrak{F}.$ We have to see
that $W=\bigcup\limits_{B\in \mathcal{J}}B$ is a prime open set of $%
Spec\left( A\right) .$ As $\mathcal{J}$ is proper, there exists $B\in 
\mathfrak{F}$ such that $B\notin \mathcal{J}$. Suppose that $B\subseteq W.$
As $B$ is compact, there exist $B_{1},\cdots ,B_{n}\in \mathcal{J}$ such
that $B\subseteq B_{1}\cup \cdots \cup B_{n},$ then $B\in \mathcal{J}$. We
conclude that $W\neq Spec\left( A\right) .$ Let $S$ and $T$ be open sets of $%
Spec\left( A\right) $ such that $S\cap T\subseteq W.$ There exist $%
X,Y\subseteq A$ such that $S=\bigcup\limits_{x\in X}D(x)$ and $%
T=\bigcup\limits_{y\in Y}D(y).$ Thus, $D(xy)=D(x)\cap D(y)\subseteq W,$ for
all $x\in X$ and for all $y\in Y.$ As $D(xy)$ is compact, there exist $%
B_{1},\cdots ,B_{n}\in \mathcal{J}$ such that $D(xy)\subseteq B_{1}\cup
\cdots \cup B_{n},$ therefore $D(x)\cap D(y)=D(xy)\in \mathcal{J}$. As $%
\mathcal{J}$ is prime, $D(x)\in \mathcal{J}$ or $D(y)\in \mathcal{J}.$ If $%
D(y_{0})\notin \mathcal{J}$ for some $y_{0}\in Y$ then $D(x)\in \mathcal{J}$
for all $x\in X,$ therefore $S\subseteq W.$ Similarly, if $D(x_{0})\notin 
\mathcal{J}$ for some $x_{0}\in X,$ then $T\subseteq W.$ We conclude that $W$
is a prime open set of $Spec\left( A\right) .$ Hence $W^{c}$ is an
irreducible closed set of $Spec\left( A\right) $ and as this space is sober,
there exists a unique $P\in Spec\left( A\right) $ such that $\overline{\{P\}}%
=W^{c}.$ Define $g(\mathcal{J})=P.$

\noindent Thus, we have the maps $f:Spec\left( A\right) \rightarrow \mathfrak{spec}\left( \mathfrak{F}%
\right) $ and $g:\mathfrak{spec}\left( \mathfrak{F}\right) \rightarrow Spec\left(
A\right) .$ Besides:%
\begin{equation*}
C\in f\left( g\left( \mathcal{J}\right) \right) \Leftrightarrow g\left( 
\mathcal{J}\right) \notin C\Leftrightarrow C\subseteq \overline{\left\{
g\left( \mathcal{J}\right) \right\} }^{c}\Leftrightarrow C\subseteq
\bigcup\limits_{B\in \mathcal{J}}B\Leftrightarrow C\in \mathcal{J},
\end{equation*}%
where the last equivalence is a consequence of the compactness of $C$.

\noindent On the other hand, as $\bigcup\limits_{B\in
f(I)}B=\bigcup\limits_{I\notin B}B=\overline{\left\{ I\right\} }^{c}$, we
have that $g(f(I))=I.$

\noindent We need to see that $f$ is continuous and open. Consider $K\in 
\mathfrak{F}:$%
\begin{equation*}
I\in f^{-1}(d(K))\Leftrightarrow f\left( I\right) \in d(K)\Leftrightarrow
K\notin f(I)\Leftrightarrow I\in K
\end{equation*}%
then $f^{-1}(d(K))=K.$ We conclude that $f$ is continuous and open over its
image and as the image is $\mathfrak{spec}\left( \mathfrak{F}\right) ,$ $f:Spec\left(
A\right) \rightarrow \mathfrak{spec}\left( \mathfrak{F}\right) $ is a homeomorphism.
\end{proof}

\begin{theorem}
\label{Abto de espect es c-espec}Every open of a spectral space is an
almost-spectral space.
\end{theorem}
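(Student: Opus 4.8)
The plan is to reduce the statement directly to Theorem \ref{nilcompactacion}(i) by recognizing every open subspace of a spectral space as the spectrum of an ideal. Let $X$ be a spectral space; by definition $X$ is homeomorphic to $Spec(R)$ for some commutative unitary ring $R$, and since being almost-spectral is a topological property it suffices to show that every open subspace $U$ of $Spec(R)$ is almost-spectral.

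The key observation is the standard description of the Zariski topology: the closed subsets of $Spec(R)$ are precisely the sets $V(J)$ with $J$ an ideal of $R$. Consequently, the open set $U$ can be written as $U=V(I)^{c}$ for some ideal $I$ of $R$. The point is that $I$, equipped with the operations inherited from $R$, is itself a commutative ring (generally without identity), and it is by construction an ideal of $R$.

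I would then apply Theorem \ref{nilcompactacion}(i) with $S=I$ and the ambient ring $R$: since $I$ is an ideal of $R$, there is a homeomorphism $Spec(I)\cong V(I)^{c}=U$. Thus $U$ is homeomorphic to the prime spectrum of the commutative (not necessarily unitary) ring $I$, which is exactly what it means for $U$ to be almost-spectral. Composing with the homeomorphism $X\cong Spec(R)$ transfers this conclusion to any open subspace of the original spectral space $X$.

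There is essentially no obstacle here beyond invoking the cited result: the only point requiring care is the passage from an arbitrary open set to the form $V(I)^{c}$, and this is immediate from the fact that closed sets in $Spec(R)$ are exactly the $V(I)$. All the substantive work, namely the identification of $Spec(I)$ with the open subspace $V(I)^{c}$, is already packaged in Theorem \ref{nilcompactacion}(i), so the argument is a short deduction rather than a new construction.
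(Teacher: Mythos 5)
Your proposal is correct and is exactly the paper's own argument: write the open set as $V(I)^{c}$ for an ideal $I$ of the unitary ring $R$, observe that $I$ is itself a commutative (not necessarily unitary) ring, and invoke Theorem \ref{nilcompactacion}(i) to get $Spec(I)\approx V(I)^{c}$. Nothing to add.
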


\begin{proof}
Let $A$ be a commutative ring with identity and let $Z$ be an open set of $%
Spec\left( A\right) .$ We know that there exists $I$ ideal of $A$ such that $%
Z^{c}=V(I).$ We have, by Theorem \ref{nilcompactacion}, 
\begin{equation*}
Spec(I)\approx \left( V\left( I\right) \right) ^{c}=Z.
\end{equation*}

\noindent Therefore $Z$ is an almost-spectral space.
\end{proof}

\begin{theorem}
\label{Charact almost-spectral}Let $Z$ be a topological space. The following
statements are equivalent:

(i) $Z$ is almost-spectral.

(ii) $Z$ is open-dense of a spectral space.

(iii) $Z$ is open of a spectral space.

(iv) $Z$ is a sober Balbes-Dwinger space.

(v) $Z$ is homeomorphic to the prime spectrum of a distributive lattice with
minimum.
\end{theorem}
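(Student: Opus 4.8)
The plan is to prove the two blocks $(i)\Rightarrow(ii)\Rightarrow(iii)\Rightarrow(i)$ and $(i)\Rightarrow(iv)\Leftrightarrow(v)\Rightarrow(iii)$, which together force all five statements to be equivalent. For $(i)\Rightarrow(ii)$ I would simply invoke Theorem \ref{nilcompactacion}(ii): writing $Z\approx Spec(S)$ for a commutative ring $S$, that theorem exhibits $Spec(S)$ as an open-dense subspace of $Spec(Q(S))$, and $Spec(Q(S))$ is spectral because $Q(S)$ is unitary. The step $(ii)\Rightarrow(iii)$ is immediate, and $(iii)\Rightarrow(i)$ is precisely Theorem \ref{Abto de espect es c-espec}. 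This makes $(i)$, $(ii)$, $(iii)$ mutually equivalent with no new work.

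To reach the lattice side I would use $(i)\Rightarrow(iv)$, which is exactly Theorem \ref{C-esp es B-D sobrio}, and then establish $(iv)\Leftrightarrow(v)$ through Balbes-Dwinger duality. If $Z$ is a Balbes-Dwinger space, then $Z\approx\mathfrak{spec}(\mathfrak{F}(Z))$ by Theorem \ref{spec es equivalencia}, where $\mathfrak{F}(Z)$ is a distributive lattice; since $Z$ is sober, the lemma stating that a distributive lattice has a least element iff its spectrum is sober forces $\mathfrak{F}(Z)$ to have a minimum, giving $(v)$. Conversely, if $Z\approx\mathfrak{spec}(L)$ with $L$ distributive and possessing a minimum, then $Z$ is automatically a Balbes-Dwinger space, and the same lemma read in the other direction shows it is sober, giving $(iv)$.

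The only genuinely new argument, and the step I expect to be the main obstacle, is closing the loop via $(v)\Rightarrow(iii)$, i.e.\ realizing a sober Balbes-Dwinger space as an open subspace of a spectral space. Given $Z\approx\mathfrak{spec}(L)$ with $L$ distributive having minimum $0$ but possibly no maximum, I would adjoin a new top element to form $L^{1}=L\cup\{1\}$, which is a bounded distributive lattice, so $\mathfrak{spec}(L^{1})$ is spectral by the classical characterization. The crux is the analysis of the prime ideals of $L^{1}$: every proper ideal of $L^{1}$ avoids $1$ and hence is an ideal of $L$, and one checks that $L$ itself is a prime ideal of $L^{1}$ (from $a\wedge b=1$ only if $a=b=1$). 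Thus the prime ideals of $L^{1}$ are exactly those of $L$ together with the single extra point $L$, so that the open set of $\mathfrak{spec}(L^{1})$ attached to the ideal $L$, namely $\bigcup_{a\in L}d(a)$, is precisely the set of prime ideals distinct from $L$.

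To conclude I would verify that the subspace topology $\mathfrak{spec}(L)$ inherits from $\mathfrak{spec}(L^{1})$ is its own topology: for $a\in L$ the trace $d(a)\cap\mathfrak{spec}(L)$ is the basic open $d(a)$ of $\mathfrak{spec}(L)$, while the trace of $d(1)$ is the whole space, so the induced basic opens generate exactly the Zariski topology of $\mathfrak{spec}(L)$. Hence $Z\approx\mathfrak{spec}(L)$ is an open subspace of the spectral space $\mathfrak{spec}(L^{1})$, which is $(iii)$. The delicate points to control are that $L$ contributes exactly one extra point (primality of $L$ in $L^{1}$) and that adjoining the top introduces no spurious opens, so that the homeomorphism with the open subspace is clean; once these are settled, combining the two blocks closes every implication and yields the full equivalence.
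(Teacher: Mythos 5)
Your proposal is correct and follows essentially the same route as the paper: the same cycle decomposition $(i)\Rightarrow(ii)\Rightarrow(iii)\Rightarrow(i)$ via Theorems \ref{nilcompactacion} and \ref{Abto de espect es c-espec}, the bridge $(i)\Rightarrow(iv)$ via Theorem \ref{C-esp es B-D sobrio}, the lemma on least elements versus sobriety for $(iv)\Leftrightarrow(v)$, and for $(v)\Rightarrow(iii)$ your lattice $L^{1}=L\cup\{1\}$ is exactly the paper's ordinal sum $\widehat{L}=L\Lsh\Theta$. The only difference is that you work out in detail (correctly) the identification of the prime ideals of $L^{1}$ and the subspace topology, which the paper leaves implicit.
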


\begin{proof}
(i)$\Rightarrow $(ii): If $Z$ is almost-spectral there exists a commutative
ring $A$ such that $Z\approx Spec\left( A\right) .$ We know that $Spec\left(
A\right) \approx Spec_{A}\left( U_{0}\left( A\right) \right) $ which is an
open-dense of $Spec\left( U_{0}\left( A\right) \right) $. (See \cite%
{Acosta-Rubio}).

\noindent (ii)$\Rightarrow $(iii): Trivial.

\noindent (iii)$\Rightarrow $(i): Theorem \ref{Abto de espect es c-espec}.

\noindent (i)$\Rightarrow $(iv): Theorem \ref{C-esp es B-D sobrio}.

\noindent (iv)$\Rightarrow $(v): Theorem 9 (IV-1) of \cite{Balbes-D} and
Proposition 5.7 of \cite{Acosta}.

\noindent (v)$\Rightarrow $(iii): Let $L$ be a distributive lattice with $0$
such that $Z\approx \mathfrak{spec}\left( L\right) .$ We have that $\mathfrak{spec}\left( Z\right) 
$ is an open of $\mathfrak{spec}\left( \widehat{L}\right) $, where $\widehat{L}=L\Lsh
\Theta $ and $\Theta $ is a lattice with only one element\footnote{%
If $L$ and $M$ are lattices, its \textit{ordinal sum} $L\Lsh M$ is defined
by the set $L\times \left\{ 0\right\} \cup M\times \left\{ 1\right\} $
ordered by: $\left( x,i\right) \leq \left( y,j\right) $ if $x\leq y$ and $i=j
$ or if $i=0$ and $j=1.$}.
\end{proof}

\begin{proposition}
If $X$ is spectral then every open subspace of $X$ is almost-spectral and
every closed subspace is spectral.
\end{proposition}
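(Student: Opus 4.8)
The plan is to prove the two assertions separately, using results already established in the excerpt. Let $X$ be a spectral space, so by definition $X$ is homeomorphic to $\mathit{Spec}(A)$ for some commutative unitary ring $A$; equivalently (by Hochster, cited in the introduction) $X$ is sober, coherent and compact.

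For the first assertion, let $Z$ be an open subspace of $X$. Since $X$ is spectral, Theorem \ref{Abto de espect es c-espec} says precisely that every open of a spectral space is almost-spectral, so $Z$ is almost-spectral and there is nothing further to do. (Alternatively, one may invoke the equivalence (iii)$\Rightarrow$(i) of Theorem \ref{Charact almost-spectral}.) So this half is immediate from the characterization theorem just proved.

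For the second assertion, let $Y$ be a closed subspace of $X$. The plan is to transport $Y$ to the ring side. Writing $X\approx \mathit{Spec}(A)$ with $A$ unitary, every closed set of $\mathit{Spec}(A)$ has the form $V(I)$ for an ideal $I$ of $A$, so I may assume $Y$ corresponds to $V(I)$. I would then invoke the known fact, recalled in the Preliminaries just after Theorem \ref{nilcompactacion}, that the map $\theta:V(I)\to \mathit{Spec}(A/I):P\mapsto P/I$ is a homeomorphism. Hence $Y\approx \mathit{Spec}(A/I)$, and since $A/I$ is again a commutative unitary ring, $Y$ is spectral. This is the whole argument for the closed case.

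The main point to be careful about — though it is not really an obstacle — is simply matching each subspace to the correct algebraic object and citing the right transport result: open subspaces go through Theorem \ref{Abto de espect es c-espec} (which internally uses the non-unitary ring $\mathit{Spec}(I)\approx V(I)^{c}$ of Theorem \ref{nilcompactacion}), whereas closed subspaces go through the quotient homeomorphism $V(I)\approx \mathit{Spec}(A/I)$ and stay within the unitary setting. I would present both halves in one short paragraph each, making the homeomorphisms explicit enough that the reader sees $Z$ landing in the almost-spectral class and $Y$ landing back in the spectral class.
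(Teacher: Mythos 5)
Your proposal is correct and follows essentially the same route as the paper's own proof: the open case is exactly Theorem \ref{Abto de espect es c-espec}, and the closed case identifies the closed subspace with $V(I)$ and uses the homeomorphism $V(I)\approx Spec(A/I)$ from the Preliminaries, with $A/I$ unitary so the result is again spectral. Your version merely spells out the citations more explicitly than the paper does; there is nothing to correct.
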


\begin{proof}
The first part is consequence of Theorem \ref{Abto de espect es c-espec}.
Let $Z$ be a closed subspace of $X.$ As $X\approx Spec\left( A\right) ,$
where $A$ is a ring with identity, then $Z\approx V\left( I\right) \approx
Spec\left( A/I\right) ,$ for some ideal $I$ of $A.$
\end{proof}

\noindent Similarly we obtain the following proposition.

\begin{proposition}
If $Z$ is almost-spectral then every open subspace is almost-spectral and
every closed subspace is almost-spectral.
\end{proposition}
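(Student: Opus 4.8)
The plan is to reduce both claims to results already established in the excerpt, using the open/closed dichotomy together with Theorem~\ref{Charact almost-spectral}, which tells us that ``almost-spectral'' is the same as ``open subspace of a spectral space.''

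First I would handle the open part. Suppose $Z$ is almost-spectral and let $U\subseteq Z$ be open. By Theorem~\ref{Charact almost-spectral} there is a spectral space $X$ with $Z$ an open subspace of $X$. Since $U$ is open in $Z$ and $Z$ is open in $X$, transitivity of the ``open subspace'' relation gives that $U$ is open in $X$. Hence $U$ is an open subspace of a spectral space, and so by Theorem~\ref{Charact almost-spectral} again (the equivalence (iii)$\Leftrightarrow$(i)) we conclude $U$ is almost-spectral.

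The closed part is the one requiring a genuine idea, and I expect it to be the main obstacle, because a closed subspace of an almost-spectral space need \emph{not} be closed in the ambient spectral space. So realize $Z\approx Spec(A)$ for a commutative ring $A$ (using (i)) and let $C$ be a closed subspace. Then $C=V(I)$ for some ideal $I$ of $A$, and the homeomorphism $V(I)\approx Spec(A/I)$ recalled in the Preliminaries (the map $P\mapsto P/I$) exhibits $C$ as the prime spectrum of the commutative ring $A/I$. Since $A/I$ need not be unitary even when $A$ is, this is exactly why we must invoke the non-unitary characterization rather than the classical spectral one: $C\approx Spec(A/I)$ is almost-spectral directly from the definition. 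This mirrors the argument in the preceding proposition for the spectral case, with $A/I$ replacing a unitary quotient.

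Putting the two halves together yields the proposition. I would keep the write-up short, pointing to Theorem~\ref{Charact almost-spectral} for the open case and to the ring-quotient homeomorphism $V(I)\approx Spec(A/I)$ for the closed case, since both ingredients are already available in the excerpt and no further computation is needed.
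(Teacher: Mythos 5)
Your argument is correct and is essentially the proof the paper intends: the paper gives this proposition with only the remark ``Similarly we obtain the following proposition'' after the spectral case, and the adaptation is exactly your two steps --- the open half via Theorem~\ref{Charact almost-spectral} ((i)$\Leftrightarrow$(iii)) together with transitivity of the open-subspace relation, and the closed half by realizing $Z\approx Spec(A)$ for a commutative ring $A$, writing the closed subspace as $V(I)$, and invoking the homeomorphism $V(I)\approx Spec(A/I)$ from the Preliminaries. Your observation that a closed subspace of $Z$ need not be closed in the ambient spectral space, so the ring realization is genuinely needed for the closed half, is also the right diagnosis.

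One aside in your write-up is false, although it does not damage the logic: you claim that ``$A/I$ need not be unitary even when $A$ is.'' If $A$ has an identity $1$, then $A/I$ has the identity $1+I$; indeed this is precisely why the preceding proposition can conclude that closed subspaces of \emph{spectral} spaces are spectral rather than merely almost-spectral. The correct reason the non-unitary framework is needed here is simply that $A$ itself need not have an identity (since $Z$ is only almost-spectral), so $A/I$ is merely a commutative ring and $Spec(A/I)$ is almost-spectral directly from the definition. Your chain of reductions only uses that $A/I$ is a commutative ring, so the proof stands once this remark is corrected.
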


\section{Up-spectral and down-spectral spaces}

In this section we present spectral versions for the up-spectral and
down-spectral spaces. As a consequence, we obtain a new topological
characterization of the Balbes-Dwinger spaces.

First of all we recall the definition of these kind of topological spaces:

\begin{definition}
A space is \textbf{up-spectral} if it is coherent and sober. A space is 
\textbf{down-spectral} if it is $T_{0}$, coherent, compact and almost-sober.
(See \cite{Echi 2}).
\end{definition}

Actually, the notions of up-spectral space and almost-spectral space are
equivalent, as the following theorem shows.

\begin{theorem}
\label{up-espect ento B-D + sobrio}Let $Z$ be a topological space. The
following statements are equivalent:

(i) $Z$ is up-spectral.

(ii) $Z$ is almost-spectral.
\end{theorem}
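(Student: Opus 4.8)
The plan is to prove the equivalence by comparing both notions against the already-established characterization of almost-spectral spaces. By Theorem \ref{Charact almost-spectral}, a space $Z$ is almost-spectral if and only if it is a sober Balbes-Dwinger space. So the whole task reduces to showing that $Z$ is up-spectral if and only if it is a sober Balbes-Dwinger space. Since up-spectral means coherent and sober by definition, and sobriety appears on both sides, what really needs proving is: for a sober space, being coherent is equivalent to being a (sober) Balbes-Dwinger space.

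First I would show (i)$\Rightarrow$(ii). Assume $Z$ is up-spectral, i.e.\ coherent and sober. I want to conclude $Z$ is a sober Balbes-Dwinger space, so by Theorem \ref{Charact almost-spectral} it suffices to check that $Z$ is a Balbes-Dwinger space. The cleanest route is through the $h_Z$ machinery: a coherent sober space is in particular $T_0$ and almost-sober (sober trivially implies almost-sober, and sober implies $T_0$), so by Corollary \ref{h es homeo sii X es F-esp, To, casi-sobr} the map $h_Z:Z\to\mathfrak{spec}(\mathfrak{F}(Z))$ is a homeomorphism. Here I must first confirm that $Z$ is an RA-space, i.e.\ that $\mathfrak{F}(Z)$ is a sublattice of $\wp(Z)$; sobriety forces $\emptyset$ to be fundamental (a coherent sober space with the finite-intersection property for its open-compact basis behaves correctly because $Z$ itself, being irreducible only if it equals a point-closure, rules out the bad configuration), so $Z$ is an RA-space. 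Then $Z\cong\mathfrak{spec}(\mathfrak{F}(Z))$ exhibits $Z$ as the prime spectrum of the distributive lattice $\mathfrak{F}(Z)$; since $Z$ is sober, that lattice has a least element by the lemma attributed to \cite{Acosta}, so $Z$ is in fact almost-spectral by statement (v) of Theorem \ref{Charact almost-spectral}.

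For the converse (ii)$\Rightarrow$(i), assume $Z$ is almost-spectral. By Theorem \ref{C-esp es B-D sobrio}, $Z$ is a sober Balbes-Dwinger space; in particular $Z$ is sober. A Balbes-Dwinger space is coherent by the characterization theorem recalled in the Preliminaries (it is $T_0$, coherent, and satisfies the collection condition). Hence $Z$ is both coherent and sober, which is exactly the definition of up-spectral. This direction is essentially immediate once Theorem \ref{C-esp es B-D sobrio} is invoked.

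The main obstacle I anticipate is the verification in the forward direction that a coherent sober space really is an RA-space, i.e.\ that $\mathfrak{F}(Z)$ forms a sublattice of $\wp(Z)$, which amounts to ruling out the pathology noted after the RA-space definition: $\emptyset$ not fundamental together with two disjoint nonempty open-compact sets. I would handle this by observing that sobriety gives good control over the behaviour of the empty intersection condition, so that either $\emptyset$ is fundamental or the obstruction cannot occur. Everything else is either a direct application of Corollary \ref{h es homeo sii X es F-esp, To, casi-sobr} and the least-element lemma, or a straightforward unpacking of definitions, so the argument should be short once this point is settled.
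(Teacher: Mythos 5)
Your (ii)$\Rightarrow$(i) direction is correct and is exactly the paper's argument: Theorem \ref{C-esp es B-D sobrio} gives that an almost-spectral space is a sober Balbes-Dwinger space, and Balbes-Dwinger spaces are coherent by the characterization recalled in the Preliminaries, so $Z$ is coherent and sober. The problem is in (i)$\Rightarrow$(ii), and it sits precisely where you anticipated it: before you may invoke Corollary \ref{h es homeo sii X es F-esp, To, casi-sobr}, you must know that a coherent sober space $Z$ is an RA-space (the whole $h_X$ machinery of Section 3 is developed under the standing hypothesis that $X$ is an RA-space; $h_X$ is not even defined otherwise, since one needs $\mathfrak{F}(X)$ to be a lattice to speak of its prime ideals). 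Your justification of this step --- that ``sobriety forces $\emptyset$ to be fundamental'' because $Z$, ``being irreducible only if it equals a point-closure, rules out the bad configuration'' --- is an assertion, not an argument. Sobriety supplies generic points for irreducible closed sets; it says nothing obvious about the relevant configuration, which concerns a nonempty family of nonempty open-compact sets with the finite intersection property and empty total intersection. The existence of a generic point for $Z$ (when $Z$ happens to be irreducible) is simply unrelated to such filtered intersections.

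The claim you need is in fact true: in a sober space every filtered family of nonempty compact saturated sets (and open-compact sets are compact saturated) has nonempty intersection --- this is the well-filteredness of sober spaces, a consequence of the Hofmann--Mislove theorem, and it yields that $\emptyset$ is fundamental in any coherent sober space, so $\mathfrak{F}(Z)$ is trivially a sublattice of $\wp(Z)$. But this is a genuine theorem requiring a Zorn-type argument on filters of open sets, and nothing of the kind is available inside the paper; this is presumably why the paper does not argue internally at all, but instead cites Proposition 1.5 of \cite{Echi 1}: the trivial compactification $Z^{\omega}$ of an up-spectral space is spectral, whence $Z$ is an open subspace of a spectral space and Theorem \ref{Abto de espect es c-espec} (equivalently (iii)$\Rightarrow$(i) of Theorem \ref{Charact almost-spectral}) gives that $Z$ is almost-spectral. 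So your proposal as written has a real gap at its central step. If you close it --- either by proving or citing the Hofmann--Mislove/well-filtered property, or by substituting the compactification argument from \cite{Echi 1} --- the remainder of your route does go through: sober implies $T_{0}$ and almost-sober, Corollary \ref{h es homeo sii X es F-esp, To, casi-sobr} gives $Z\cong \mathfrak{spec}\left( \mathfrak{F}(Z)\right)$, the lemma from \cite{Acosta} gives that $\mathfrak{F}(Z)$ has a least element (indeed, once you know $\emptyset \in \mathfrak{F}(Z)$ this lemma is redundant), and (v)$\Rightarrow$(i) of Theorem \ref{Charact almost-spectral} concludes. As it stands, however, the key point is asserted rather than proved.
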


\begin{proof}
If $Z$ is up-spectral then $Z^{\omega }$ (the
trivial compactification of $Z$) is a spectral space (Proposition 1.5 of 
\textit{\cite{Echi 1})}. Thus, $Z$ is open of a spectral space and therefore
is almost-spectral. Reciprocally, if $Z$ is almost-spectral then $Z$ is a Balbes-Dwinger 
and sober space. Hence, $Z$ is up-spectral.
\end{proof}\bigskip

\begin{corollary}
Let $Z$ be a topological space. The following statements are equivalent:

(i) $Z$ is up-spectral.

(ii) $Z$ is homeomorphic to the prime spectrum of a distributive lattice
with minimum.
\end{corollary}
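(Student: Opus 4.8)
The plan is straightforward: I would obtain this corollary by simply composing two equivalences that have already been established. The first building block is Theorem \ref{up-espect ento B-D + sobrio}, which tells us that a space $Z$ is up-spectral precisely when it is almost-spectral. The second is the chain of equivalent conditions in Theorem \ref{Charact almost-spectral}, in which condition (i) ($Z$ is almost-spectral) is shown equivalent to condition (v) ($Z$ is homeomorphic to the prime spectrum of a distributive lattice with minimum). Since up-spectral $\Leftrightarrow$ almost-spectral and almost-spectral $\Leftrightarrow$ (v) are both already proved, the corollary is a purely transitive consequence.

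Concretely, for the forward direction of the corollary I would argue: if $Z$ is up-spectral, then by Theorem \ref{up-espect ento B-D + sobrio} it is almost-spectral, and then by the (i) $\Rightarrow$ (v) implication of Theorem \ref{Charact almost-spectral} it is homeomorphic to $\mathfrak{spec}(L)$ for some distributive lattice $L$ with minimum. For the converse I would reverse exactly these two steps: the (v) $\Rightarrow$ (i) implication of Theorem \ref{Charact almost-spectral} makes $Z$ almost-spectral, and Theorem \ref{up-espect ento B-D + sobrio} then promotes it to up-spectral. Chaining the two bi-implications gives the stated equivalence.

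I do not anticipate any genuine obstacle, since both ingredients are already available as full equivalences and no new topological or lattice-theoretic construction is required. The only point that calls for care is bookkeeping with the labels: one must invoke the \emph{minimum} (least element) version of the lattice characterization, namely condition (v) of Theorem \ref{Charact almost-spectral}, and not the bounded-lattice version that characterizes ordinary spectral spaces. With that caveat observed, the proof reduces to a single sentence citing the two theorems.
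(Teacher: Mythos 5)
Your proposal is correct and is exactly the paper's intended derivation: the corollary is stated immediately after Theorem \ref{up-espect ento B-D + sobrio} with no separate proof, precisely because it follows by composing that theorem (up-spectral $\Leftrightarrow$ almost-spectral) with the equivalence (i)$\Leftrightarrow$(v) of Theorem \ref{Charact almost-spectral}, as you do. Your caution about invoking the minimum-lattice condition (v), rather than the bounded-lattice characterization of spectral spaces, is well placed but introduces nothing beyond the paper's reading.
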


\begin{theorem}
Every Balbes-Dwinger space is almost-sober.
\end{theorem}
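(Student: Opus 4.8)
The plan is to reduce to the lattice model and then argue directly, essentially reproducing in the concrete setting the surjectivity argument used in Proposition \ref{h sobre ssi X casi-sobrio}. By definition a Balbes-Dwinger space $X$ is homeomorphic to $\mathfrak{spec}(L)$ for some distributive lattice $L$ (by Theorem \ref{spec es equivalencia} one may even take $L=\mathfrak{F}(X)$), so it suffices to show that $\mathfrak{spec}(L)$ is almost-sober for an arbitrary distributive lattice $L$. First I would record two facts: since $d:L\rightarrow \wp(\mathfrak{spec}(L))$ is a lattice homomorphism, every open-compact subset is of the form $d(a)$ (a finite union $d(a_1)\cup\cdots\cup d(a_n)=d(a_1\vee\cdots\vee a_n)$); and the closure of a point is $\overline{\{P\}}=\{Q\in\mathfrak{spec}(L):Q\supseteq P\}$.

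Now let $G$ be a proper irreducible closed set and put $A=\mathfrak{spec}(L)-G$, a prime open set with $\emptyset\neq A\neq\mathfrak{spec}(L)$. The candidate point is the prime ideal
\[
\mathfrak{I}=\{a\in L:d(a)\subseteq A\}.
\]
I would verify that $\mathfrak{I}$ is a prime ideal of $L$: it is a lower set closed under finite joins because $d$ is order preserving and $d(a\vee b)=d(a)\cup d(b)$; it is non-empty because $A$ is a non-empty open set and hence contains some non-empty basic $d(a)\subseteq A$; it is proper because any $Q\in G$ is a proper ideal, so picking $a\notin Q$ gives $Q\in d(a)$ and thus $d(a)\not\subseteq A$; and it is prime because $A$ is a prime open set, so $d(a)\cap d(b)=d(a\wedge b)\subseteq A$ forces $d(a)\subseteq A$ or $d(b)\subseteq A$.

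Finally I would check that $G=\overline{\{\mathfrak{I}\}}=\{Q:Q\supseteq\mathfrak{I}\}$. For the inclusion $\supseteq$: if $Q\in G$ then $Q\notin A$, so for every $a\in\mathfrak{I}$ we have $Q\notin d(a)$, i.e. $a\in Q$, whence $\mathfrak{I}\subseteq Q$. For $\subseteq$: if $Q\supseteq\mathfrak{I}$ but $Q\in A$, then pick a basic $d(a)$ with $Q\in d(a)\subseteq A$; this gives $a\in\mathfrak{I}\subseteq Q$, contradicting $Q\in d(a)$, so $Q\in G$. This exhibits $G$ as the closure of a point, proving $\mathfrak{spec}(L)$ almost-sober.

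I expect the main obstacle to be the verification that $\mathfrak{I}$ is a genuine \emph{proper} prime ideal in the unbounded setting. Because $L$ need not have a least element, non-emptiness of $\mathfrak{I}$ cannot be obtained from $0\in\mathfrak{I}$ and must instead be extracted from $A\neq\emptyset$ together with the fact that the open-compact sets are exactly the $d(a)$; properness must be read off from $G$ being a proper (not whole-space) irreducible closed set, which is precisely why almost-soberness holds even when full soberness fails. The primality step is where the hypothesis that $A$ is a prime open set is used in full strength.
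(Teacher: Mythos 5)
Your proof is correct, and it takes a recognizably different route from the paper's, though the underlying idea is the same. The paper stays inside the space $Z$ itself: from the prime open set $A=Z-G$ it forms the prime ideal $\mathfrak{I}=\left\{ F\in \mathfrak{F}(Z):F\subseteq A\right\}$ of the lattice of fundamental sets, and then invokes the Balbes-Dwinger representation of $Z$ (in effect the surjectivity of $h_{Z}$, i.e.\ Theorem \ref{spec es equivalencia} together with Proposition \ref{h sobre ssi X casi-sobrio}) to produce a point $x$ with $h_{Z}(x)=\mathfrak{I}$ and hence $G=\overline{\{x\}}$. You instead pass to the model $\mathfrak{spec}(L)$, where no realization theorem is needed: your prime ideal $\mathfrak{I}=\{a\in L:d(a)\subseteq A\}$ (which is exactly the $d$-preimage of the paper's ideal) \emph{is} the desired point, and you verify $G=\overline{\{\mathfrak{I}\}}$ by hand from $d(a\vee b)=d(a)\cup d(b)$, $d(a\wedge b)=d(a)\cap d(b)$, and $\overline{\{P\}}=\{Q:Q\supseteq P\}$. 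What your version buys is self-containedness: it uses only the definition of a Balbes-Dwinger space plus elementary facts about spectra of lattices, avoiding the duality machinery entirely, whereas the paper's proof is shorter because that machinery is already in place; your verifications of non-emptiness (from the basis property and $A\neq\emptyset$), properness (from $G\neq\emptyset$ and prime ideals of $L$ being proper), and primality (from irreducibility of $G$) are all sound, and the remark about open-compact sets being exactly the sets $d(a)$, while true, is not actually needed in your argument. One cosmetic slip: the labels ``$\subseteq$'' and ``$\supseteq$'' on your two inclusions for $G=\overline{\{\mathfrak{I}\}}$ are swapped relative to what you prove under each, but since both inclusions are correctly established the proof is unaffected.
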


\begin{proof}
Let $Z$ be a Balbes-Dwinger space. Let $G$ be a proper irreducible closed
set of $Z.$ Then $A=Z-G$ is a non-empty prime open set of $Z.$ So we have
that $A=\bigcup\limits_{i\in \Lambda }F_{i}$ for some collection of
non-empty open-compact sets of $Z.$ Let $\mathfrak{I}$ be the ideal of $%
\mathfrak{F}(Z)$ generated by $\{F_{i}\}_{i\in \Lambda },$ $\mathfrak{I}%
=\left\{ F\in \mathfrak{F}(Z):F\subseteq A\right\} .$ As $A$ is a prime open
set, $\mathfrak{I}$ is a prime ideal of $\mathfrak{F}(Z).$ Since $Z$ is a
Balbes-Dwinger space, there exists $x\in Z$ such that $\mathfrak{I}=\{F\in 
\mathfrak{F}(Z):x\notin F\}.$ It is clear that $G=\overline{\{x\}}.$
\end{proof}

\noindent The following theorem gives an additional and simpler topological
characterization for the Balbes-Dwinger spaces.

\begin{theorem}
Let $Z$ be a topological space. The following statements are equivalent:

(i) $Z$ is $T_{0},$ coherent and almost-sober.

(ii) $Z$ is a Balbes-Dwinger space.
\end{theorem}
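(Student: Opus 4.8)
The plan is to prove the two implications separately, drawing on the machinery of $\mathfrak{F}(X)$ and the map $h_X$ developed in Section~3, together with the Balbes-Dwinger duality of Theorem~\ref{spec es equivalencia}.

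For the direction (ii)$\Rightarrow$(i), I would invoke results already established. A Balbes-Dwinger space is by definition $T_0$ and coherent (these are part of the topological characterization quoted from \cite{Balbes-D}), and the immediately preceding theorem states that every Balbes-Dwinger space is almost-sober. So this direction is essentially a one-line assembly of facts already in hand.

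The substantive direction is (i)$\Rightarrow$(ii). Suppose $Z$ is $T_0$, coherent and almost-sober. The first thing to observe is that such a $Z$ is an RA-space: coherence is given, and I would need to check that $\mathfrak{F}(Z)$ is a sub-lattice of $\wp(Z)$. Here I expect to use the almost-sober hypothesis, via the remark after the definition of RA-space, which says $\mathfrak{F}(Z)$ fails to be a sublattice precisely when $\emptyset$ is not fundamental and there exist two disjoint non-empty open-compact sets; I would argue that almost-soberness rules out this pathology (two disjoint non-empty open-compact sets would, together, force an irreducible closed set that is not a point-closure unless $\emptyset$ is fundamental). Once $Z$ is known to be an RA-space, I can apply the map $h_Z : Z \to \mathfrak{spec}(\mathfrak{F}(Z))$. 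By Corollary~\ref{h es homeo sii X es F-esp, To, casi-sobr}, since $Z$ is $T_0$ (so $h_Z$ is injective by the corresponding proposition) and almost-sober (so $h_Z$ is surjective by Proposition~\ref{h sobre ssi X casi-sobrio}), the map $h_Z$ is a homeomorphism. Therefore $Z$ is homeomorphic to $\mathfrak{spec}(\mathfrak{F}(Z))$, the prime spectrum of the distributive lattice $\mathfrak{F}(Z)$, and hence is a Balbes-Dwinger space by definition.

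I expect the main obstacle to be the verification that $Z$ is an RA-space, i.e.\ that $\mathfrak{F}(Z)$ is genuinely a sub-lattice. The closure of $\mathfrak{F}(Z)$ under finite unions and intersections of \emph{non-empty} members is routine, but the delicate point is handling $\emptyset$ and the interaction between disjointness and the fundamentality condition; this is exactly where almost-soberness must be used rather than merely $T_0$ plus coherence. Once that hurdle is cleared, the conclusion follows cleanly from Corollary~\ref{h es homeo sii X es F-esp, To, casi-sobr}, so the architecture of the proof is almost entirely a reduction to the already-proved homeomorphism criterion for $h_Z$.
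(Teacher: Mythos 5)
Your direction (ii)$\Rightarrow$(i) is exactly the paper's (assembly of the quoted characterization and the preceding theorem), and your reduction of (i)$\Rightarrow$(ii) to Corollary \ref{h es homeo sii X es F-esp, To, casi-sobr} coincides with the paper's argument \emph{in the case where no two non-empty open-compact sets are disjoint}: there $\mathfrak{F}(Z)$ is a sublattice by coherence alone (any two non-empty open-compacts meet, and coherence makes the intersection open-compact), with no appeal to almost-soberness. The genuine gap is the other case. Your claim that almost-soberness rules out the pathology ``$\emptyset$ not fundamental together with a disjoint pair of non-empty open-compacts'' is in fact true, but the parenthetical you offer is not a proof, and as phrased it points at the wrong mechanism: the bad irreducible closed set does not arise from the two disjoint sets themselves; it has to be manufactured from a family $\mathcal{A}$ of non-empty open-compacts witnessing non-fundamentality of $\emptyset$ (finite intersection property, empty total intersection), and the sole role of the disjoint pair is to make that closed set \emph{proper}. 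That this is delicate is shown by the paper's own example $X_{1}=\mathbb{Z}$ with the Alexandroff topology: it is $T_{0}$, coherent and almost-sober, yet $\emptyset$ is not fundamental there --- the only irreducible closed set that is not a point-closure is $\mathbb{Z}$ itself, which is not proper, so no contradiction appears. Any correct argument must therefore thread the FIP family through a properness argument, which your sketch does not indicate.

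The hole can be filled, but it costs real work: let $L$ be the lattice of \emph{all} open-compact sets (including $\emptyset$), extend the filter generated by $\mathcal{A}$ to one disjoint from a prime ideal $\mathfrak{J}\ni\emptyset$ (the prime ideal theorem for distributive lattices, the same tool the paper uses in the lemma on strong continuity of $\mathfrak{spec}(h)$); since $F\cap G=\emptyset\in\mathfrak{J}$, primeness forces $F\in\mathfrak{J}$ or $G\in\mathfrak{J}$, so $A=\bigcup\mathfrak{J}$ is non-empty, and $A\neq Z$ by compactness of any member of the complementary filter; repeating the argument in case 2 of the proof of Proposition \ref{h sobre ssi X casi-sobrio} shows $Z-A$ is a proper irreducible closed set, hence $Z-A=\overline{\{x\}}$, whence $\mathfrak{J}=\left\{H\in L:x\notin H\right\}$ and thus $x\in\bigcap\mathcal{A}=\emptyset$, a contradiction. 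The paper avoids all of this by a cheaper case-split: if a disjoint pair of non-empty open-compacts exists, then $Z=F^{c}\cup G^{c}$ is not irreducible, so almost-sober plus $T_{0}$ already upgrade to sober; $Z$ is then up-spectral, and Theorems \ref{up-espect ento B-D + sobrio} and \ref{Charact almost-spectral} conclude immediately. So your architecture is salvageable and, once the lemma is supplied, even yields the stronger uniform statement that every $T_{0}$, coherent, almost-sober space is an RA-space with $h_{Z}$ a homeomorphism; but as written, the pivotal step is asserted rather than proved, and the hint you give would not lead to a proof without the prime-ideal construction above.
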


\begin{proof}
By the previous theorem, (ii) implies (i). Now, let $Z$ be a $T_{0},$
coherent and almost-sober space. Suppose that there exist $F,$ $G$ non-empty open-compact sets such
that $F\cap G=\emptyset ,$ so $F^{c}\cup G^{c}=Z$ and then, $Z$ is not an
irreducible set. As $Z$ is $T_{0}$ and almost-sober, then $Z$ is sober and
therefore up-spectral. Hence, by Theorems \ref{Charact almost-spectral} and \ref{up-espect ento B-D +
sobrio}, $Z$ is a Balbes-Dwinger space.

\noindent If there do not exist non-empty open-compact sets $F$ and $G$ such that $%
F\cap G=\emptyset ,$ then $\mathfrak{F}(Z)$ is a distributive lattice,
because $Z$ is coherent. Therefore, by Theorem 2 (IV) of \cite{Balbes-D}, we have
that $\mathfrak{spec}(\mathfrak{F}(Z))$ is a Balbes-Dwinger space. On the
other hand, as $Z$ is an almost-sober, $T_{0}$ F-space, then by the
Corollary \ref{h es homeo sii X es F-esp, To, casi-sobr}, $Z$ and $\mathfrak{spec}%
(\mathfrak{F}(Z))$ are homeomorphic, thus $Z$ is a Balbes-Dwinger space.
\end{proof}

As a corollary we obtain the spectral version of the down-spectral spaces.

\begin{corollary}
Let $Z$ be a topological space. The following statements are equivalent:

(i) $Z$ is down-spectral.

(ii) $Z$ is a Balbes-Dwinger and compact space.

(iii) $Z$ is homeomorphic to the prime spectrum of a distributive lattice
with maximum.
\end{corollary}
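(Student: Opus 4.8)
The plan is to prove the three-way equivalence by reducing everything to results already in hand, so the only genuinely new ingredient is the correspondence between compactness of the spectrum and the presence of a maximum in the associated lattice. The equivalence (i) $\Leftrightarrow$ (ii) is almost immediate from the definitions: $Z$ is down-spectral precisely when it is $T_{0}$, coherent, compact and almost-sober. Setting compactness aside, the remaining three conditions---$T_{0}$, coherent and almost-sober---are, by the theorem immediately preceding this corollary, equivalent to $Z$ being a Balbes-Dwinger space. Hence $Z$ is down-spectral if and only if it is a compact Balbes-Dwinger space, which is exactly (ii).

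For (iii) $\Rightarrow$ (ii) I would argue directly from the construction of the prime spectrum. If $Z\approx \mathfrak{spec}(L)$ for a distributive lattice $L$ with maximum $1$, then $Z$ is a Balbes-Dwinger space simply because it is the spectrum of a distributive lattice. Compactness is read off from the preliminaries: when $L$ has a maximum one has $d(1)=\mathfrak{spec}(L)$, and every basic set $d(a)$ is a compact subspace; therefore $\mathfrak{spec}(L)$ is compact, and so is $Z$. This is the dual of the already-quoted lemma identifying a least element of $L$ with soberness of $\mathfrak{spec}(L)$.

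The substance lies in (ii) $\Rightarrow$ (iii), where I would spend most of the effort. Given a compact Balbes-Dwinger space $Z$, the natural candidate lattice is $\mathfrak{F}(Z)$. Since $Z$ is Balbes-Dwinger, the functor $\mathfrak{F}$ sends it to a distributive lattice, and by Theorem \ref{spec es equivalencia} the space $\mathfrak{spec}(\mathfrak{F}(Z))$ is homeomorphic to $Z$. It remains to produce a maximum in $\mathfrak{F}(Z)$, and this is where compactness is used: $Z$ is open in itself and compact, hence (for $Z\neq\emptyset$, the empty case being degenerate) $Z$ is a non-empty open-compact set, so $Z\in\mathfrak{F}(Z)$. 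Because every fundamental subset is contained in $Z$, the element $Z$ is the top of the poset $\mathfrak{F}(Z)$ ordered by inclusion, i.e. the maximum of the lattice. Thus $Z\approx\mathfrak{spec}(\mathfrak{F}(Z))$ with $\mathfrak{F}(Z)$ a distributive lattice possessing a maximum, which is (iii).

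I expect the main obstacle to be exactly this last step, and specifically the verification that the top set $Z$ is genuinely the lattice maximum of $\mathfrak{F}(Z)$: one must confirm that the order delivered by the functor $\mathfrak{F}$ is inclusion (so that the largest fundamental set is the lattice maximum), and that $Z$ itself qualifies as a fundamental set precisely when $Z$ is compact. Once compactness is matched with the existence of a maximum in $\mathfrak{F}(Z)$---dually to the minimum/soberness lemma---the circle of implications closes and (iii) follows.
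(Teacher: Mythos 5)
Your proof is correct and follows exactly the route the paper intends (it states this as an immediate corollary of the preceding characterization theorem and the Balbes--Dwinger duality, leaving the details implicit): (i)$\Leftrightarrow$(ii) by matching the definition of down-spectral against the theorem that $T_{0}$ + coherent + almost-sober characterizes Balbes--Dwinger spaces, and (ii)$\Leftrightarrow$(iii) by pairing compactness of the spectrum with a maximum in the lattice, dually to the minimum/soberness lemma used for up-spectral spaces. Your verification that $Z\in\mathfrak{F}(Z)$ is the lattice maximum (join in $\mathfrak{F}(Z)$ is union, so the order is inclusion, and the duality guarantees $Z\approx\mathfrak{spec}(\mathfrak{F}(Z))$) supplies precisely the details the paper omits.
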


\section{An extension of the Balbes-Dwinger duality}

\noindent We denote $\mathfrak{FSp}$ the category whose objects are the
RA-spaces and whose morphisms are the strongly continuous functions.\bigskip

\begin{definition}
For each strongly continuous function $f:X\rightarrow Y$ between RA-spaces
we define $\mathfrak{F}\left( f\right) :\mathfrak{F}\left( Y\right)
\rightarrow \mathfrak{F}\left( X\right) $ by $\mathfrak{F}\left( f\right)
\left( F\right) =f^{\ast }\left( F\right) $ and for each proper homomorphism 
$h:L\rightarrow M$ between distributive lattices we define $\mathfrak{spec}%
\left( h\right) :\mathfrak{spec}\left( M\right) \rightarrow \mathfrak{spec}%
\left( L\right) $ by $\mathfrak{spec}\left( h\right) \left( I\right)
=h^{\ast }\left( I\right) .$
\end{definition}

\begin{lemma}
If $f:X\rightarrow Y$ is a strongly continuous function between RA-spaces,
then $\mathfrak{F}\left( f\right) :\mathfrak{F}\left( Y\right) \rightarrow 
\mathfrak{F}\left( X\right) $ is a proper homomorphism.
\end{lemma}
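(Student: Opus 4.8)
The plan is to verify the two halves of ``proper homomorphism'' separately: first that $\mathfrak{F}(f)$ is a homomorphism of lattices, and then that it pulls prime ideals back to prime ideals. Well-definedness, i.e. $\mathfrak{F}(f)(\mathfrak{F}(Y))\subseteq \mathfrak{F}(X)$, is exactly the hypothesis of strong continuity. Since both $\mathfrak{F}(X)$ and $\mathfrak{F}(Y)$ are sublattices of the corresponding power sets (this is built into the definition of RA-space), their meet and join are $\cap$ and $\cup$, so the homomorphism property is immediate from $f^{\ast}(F\cap G)=f^{\ast}(F)\cap f^{\ast}(G)$ and $f^{\ast}(F\cup G)=f^{\ast}(F)\cup f^{\ast}(G)$. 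I would dispatch this part in a line or two.

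For properness, fix a prime ideal $\mathcal{P}$ of $\mathfrak{F}(X)$ and set $\mathcal{Q}=\mathfrak{F}(f)^{\ast}(\mathcal{P})=\{F\in\mathfrak{F}(Y):f^{\ast}(F)\in\mathcal{P}\}$. The key preliminary observation is that, for \emph{any} lattice homomorphism, the preimage of a prime ideal automatically satisfies three of the four defining clauses: it is a lower set (monotonicity of $\mathfrak{F}(f)$ and $\mathcal{P}$ a lower set), closed under finite joins ($\mathfrak{F}(f)$ preserves $\vee$, $\mathcal{P}$ join-closed), and prime ($\mathfrak{F}(f)$ preserves $\wedge$, $\mathcal{P}$ prime). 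Hence the entire content of the statement is that $\mathcal{Q}$ is \emph{nonempty} and \emph{proper}; these are the two points where topology, rather than pure lattice algebra, must enter.

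Properness, $\mathcal{Q}\neq\mathfrak{F}(Y)$, I would prove by contradiction using coherence and compactness. If $f^{\ast}(F)\in\mathcal{P}$ held for every $F\in\mathfrak{F}(Y)$, then, because $Y$ is coherent its nonempty fundamental sets cover $Y$, so the sets $f^{\ast}(F)$ cover $X$; given any $H\in\mathfrak{F}(X)$, compactness of $H$ lets me cover it by finitely many $f^{\ast}(F_{1}),\dots,f^{\ast}(F_{n})$, whose union is $f^{\ast}(F_{1}\cup\cdots\cup F_{n})\in\mathcal{P}$, so $H\in\mathcal{P}$ by the lower-set property. This forces $\mathcal{P}=\mathfrak{F}(X)$, contradicting primeness, and shows $\mathcal{Q}$ is proper.

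The hard part will be nonemptiness of $\mathcal{Q}$. When $\emptyset\in\mathfrak{F}(Y)$ it is free: strong continuity gives $f^{\ast}(\emptyset)=\emptyset\in\mathfrak{F}(X)$, and $\emptyset$, being the least element, lies in every ideal, so $\emptyset\in\mathcal{Q}$. The delicate case is $\emptyset\notin\mathfrak{F}(Y)$, which I would translate geometrically: writing $A_{X}=\bigcup\mathcal{P}$, the set $A_{X}$ is a prime open set and $G_{X}=X\setminus A_{X}$ an irreducible closed set (the argument used in Proposition \ref{h sobre ssi X casi-sobrio}), and the identity $\mathcal{P}=\{B\in\mathfrak{F}(X):B\subseteq A_{X}\}$ yields $F\in\mathcal{Q}\iff f^{\ast}(F)\subseteq A_{X}\iff F\cap\overline{f(G_{X})}=\emptyset$. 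Since $\overline{f(G_{X})}$ is irreducible closed by Lemma \ref{Lema}, nonemptiness of $\mathcal{Q}$ reduces to showing $\overline{f(G_{X})}\neq Y$, i.e. that $f(G_{X})$ is not dense. I expect this to be the crux of the whole lemma: it must be extracted from strong continuity together with the finite-intersection description of fundamentality, and it is precisely here that the almost-sober behaviour of the fundamental sets of $Y$ (and, when $\emptyset\notin\mathfrak{F}(Y)$, the absence of a generic point of $Y$) has to be used. My line of attack would be to take a family of nonempty open-compact sets of $Y$ witnessing that $\emptyset$ is not fundamental, pull it back through $f^{\ast}$, and use the resulting finite-intersection data in $\mathfrak{F}(X)$ to produce a fundamental set of $Y$ disjoint from $\overline{f(G_{X})}$. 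Controlling this last step when $G_{X}$ is a proper irreducible closed set that is not the closure of a point is the part I anticipate will require the most care.
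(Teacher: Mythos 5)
Your decomposition of the problem is the right one, and two of its three pieces are correct and coincide with what the paper actually does. The homomorphism part is indeed a one-liner, exactly as you say. Your properness argument (cover $X$ by the sets $f^{\ast}(F)$ using coherence of $Y$, then use compactness of a witness $G_{0}\in\mathfrak{F}(X)\setminus\mathcal{P}$ to force $G_{0}\in\mathcal{P}$) is, almost word for word, the \emph{entire} written content of the paper's proof; the paper then dismisses everything else, including nonemptiness, with ``the missing details \dots\ are obtained directly from the definition.'' Your observation that nonemptiness is the only nontrivial remaining clause is therefore sharper than the paper itself. Your case $\emptyset\in\mathfrak{F}(Y)$ is correct, and the mixed case $\emptyset\notin\mathfrak{F}(Y)$, $\emptyset\in\mathfrak{F}(X)$ can also be closed along your lines: since $\bigcap\bigl(\mathfrak{F}(Y)\bigr)=\emptyset$, the sets $f^{\ast}(F)$, $F\in\mathfrak{F}(Y)$, have empty intersection, and fundamentality of $\emptyset$ in $X$ produces $F_{1},\dots,F_{n}$ with $f^{\ast}(F_{1}\cap\cdots\cap F_{n})=\emptyset\in\mathcal{P}$, while $F_{1}\cap\cdots\cap F_{n}\in\mathfrak{F}(Y)$ because $\mathfrak{F}(Y)$ is a sublattice. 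Your reduction of the last case to ``$f(G_{X})$ is not dense in $Y$'' is also correct.

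However, the step you flag as the crux is not merely delicate: it is false, so your program cannot be completed --- and, with the paper's convention that ideals are nonempty, neither can the paper's claim in full RA generality. Take $X$ to be the set of all finite subsets of $\mathbb{N}$ with the Alexandroff topology of $\subseteq$ (every $\uparrow S$ is open-compact, $\uparrow S\cap\uparrow T=\uparrow(S\cup T)\neq\emptyset$, every open-compact set is a finite union of such sets, and the family $\bigl\{\uparrow\{1,\dots,n\}\bigr\}_{n}$ shows $\emptyset\notin\mathfrak{F}(X)$; so $X$ is an RA-space), and $Y=\mathbb{N}$ with the Alexandroff topology, so that $\mathfrak{F}(Y)=\{\uparrow n:n\in\mathbb{N}\}$. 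The map $f(S)=\max\{n\geq 0:\{1,\dots,n\}\subseteq S\}$ is monotone, hence continuous, and $f^{\ast}(\uparrow n)=\uparrow\{1,\dots,n\}$, so $f$ is strongly continuous. Now $C=\{S:0\notin S\}$ is a lower (hence closed) set, up-directed (closed under $\cup$), hence a proper irreducible closed set, so $\mathcal{P}=\{G\in\mathfrak{F}(X):G\subseteq\uparrow\{0\}\}$ is a prime ideal: if $G\not\in\mathcal{P}$ and $H\not\in\mathcal{P}$, pick $S\in G\cap C$ and $T\in H\cap C$; since opens are upper sets, $S\cup T\in G\cap H$ and $0\notin S\cup T$, so $G\cap H\not\in\mathcal{P}$. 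But $\{1,\dots,n\}$ never contains $0$, so $f^{\ast}(\uparrow n)=\uparrow\{1,\dots,n\}\not\subseteq\uparrow\{0\}$ for every $n$, i.e.\ $\mathfrak{F}(f)^{\ast}(\mathcal{P})=\emptyset$, which is not an ideal. In your language: $G_{X}=C$ and $f(C)\ni f(\{1,\dots,n\})=n$ for all $n$, so $f(G_{X})$ \emph{is} dense. The obstruction is precisely that $X$ is not almost-sober ($C$ is not a point closure), so $\mathcal{P}$ is not of the form $h_{X}(x)$; when $X$ is almost-sober the preimage of $h_{X}(x)$ is $h_{Y}(f(x))$, which is nonempty since no point of $Y$ lies in every fundamental set, so the lemma is sound on Balbes-Dwinger domains (the setting of Theorem \ref{spec es equivalencia}) but not for arbitrary RA-spaces. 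Your instinct thus located exactly the point at which the paper's own proof is silent, and that point is a genuine error rather than a routine detail.
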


\begin{proof}
Let $\mathfrak{I}$ be a prime ideal of $\mathfrak{F}\left( X\right) .$ If $%
\mathfrak{F}\left( f\right) ^{\ast }\left( \mathfrak{I}\right) =\mathfrak{F}%
\left( Y\right) $ then for every $F\in \mathfrak{F}\left( Y\right) $ we have
that $f^{\ast }(F)\in \mathfrak{I}$. Thus, 
\begin{equation*}
X=f^{\ast }\left( \bigcup\limits_{F\in \mathfrak{F}(Y)}F\right)
=\bigcup\limits_{F\in \mathfrak{F}(Y)}f^{\ast }(F)\subseteq
\bigcup\limits_{G\in \mathfrak{I}}G.
\end{equation*}%
\noindent As $\mathfrak{I}$ is a proper ideal of $\mathfrak{F}(X),$ there
exists $G_{0}\in \mathfrak{F}(X)-\mathfrak{I}$. Then $G_{0}\subseteq
\bigcup\limits_{G\in \mathfrak{I}}G$ and as $G_{0}$ is compact, there exist 
$G_{1},\ldots G_{n}\in \mathfrak{I}$ such that $G_{0}\subseteq
\bigcup\limits_{i=1}^{n}G_{i}\in \mathfrak{I}$ and therefore $G_{0}\in 
\mathfrak{I}$, which is absurd. The missing details to see that $\mathfrak{F}%
\left( f\right) ^{\ast }\left( \mathfrak{I}\right) $ is a prime ideal of $%
\mathfrak{F}\left( Y\right) $ are obtained directly from the definition of $%
\mathfrak{F}\left( f\right) .$
\end{proof}

\begin{lemma}
If $h:L\rightarrow M$ is a proper homomorphism between distributive
lattices, then $\mathfrak{spec}\left( h\right) :\mathfrak{spec}\left(
M\right) \rightarrow \mathfrak{spec}\left( L\right) $ is a strongly
continuous function.
\end{lemma}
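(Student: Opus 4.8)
The plan is to verify the two conditions in the definition of a strongly continuous map: ordinary continuity, and that the inverse image of every fundamental subset of $\mathfrak{spec}(L)$ is a fundamental subset of $\mathfrak{spec}(M)$. Both will follow from a single computation. First I would compute, for each $a\in L$, the inverse image of the basic open set $d(a)$. Unwinding the definitions, $I\in \mathfrak{spec}(h)^{\ast}(d(a))$ iff $a\notin h^{\ast}(I)$ iff $h(a)\notin I$, so that $\mathfrak{spec}(h)^{\ast}(d(a))=d(h(a))$. Since the sets $d(a)$ form a basis of $\mathfrak{spec}(L)$ and each $d(h(a))$ is open in $\mathfrak{spec}(M)$, continuity is immediate.

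For the preservation of fundamental sets I would invoke the duality of Theorem~\ref{spec es equivalencia}, by which $d$ is an isomorphism from $L$ onto $\mathfrak{F}(\mathfrak{spec}(L))$. Hence every fundamental subset of $\mathfrak{spec}(L)$ has the form $d(a)$ for some $a\in L$: the non-empty ones are precisely the non-empty open-compact sets, and if $\emptyset$ is fundamental then $L$ has a minimum and $\emptyset=d(0)$. By the computation above, the inverse image of such a set is $d(h(a))$, which is an open-compact subset of $\mathfrak{spec}(M)$. If $d(h(a))\neq\emptyset$, it is fundamental by definition, and that case is settled.

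The only delicate point, and the step I expect to be the main obstacle, is the case $d(h(a))=\emptyset$: here I must show that $\emptyset$ is fundamental in $\mathfrak{spec}(M)$, since this depends on $M$ rather than on the particular set. I would argue directly from the lattice structure. Writing $d$ now for the map associated to $M$, suppose $d(c)=\emptyset$ for some $c\in M$; then for every $x\in M$ we have $d(c\wedge x)=d(c)\cap d(x)=\emptyset=d(c)$, and since $M$ is distributive the homomorphism $d$ is injective, so $c\wedge x=c$, that is $c\leq x$. Thus $c$ is a minimum of $M$, and applying Theorem~\ref{spec es equivalencia} once more, $\emptyset=d(c)\in\mathfrak{F}(\mathfrak{spec}(M))$ is fundamental. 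Therefore in every case the inverse image of a fundamental set is fundamental, and together with the continuity established above this shows that $\mathfrak{spec}(h)$ is strongly continuous.
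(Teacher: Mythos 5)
Your proof is correct, and it takes a genuinely different route from the paper's. The paper outsources the identity $\mathfrak{spec}\left( h\right) ^{\ast }\left( d\left( a\right) \right) =d\left( h\left( a\right) \right) $ to Proposition 5.6 of \cite{Acosta} (inverse images of open-compact sets are open-compact), reduces preservation of the fundamental empty set, via Proposition 5.8 of \cite{Acosta}, to the implication ``if $L$ has a minimum then $M$ has a minimum,'' and proves that implication by contradiction: assuming $M$ has no minimum, it picks $t<s=h\left( 0\right) ,$ separates the ideal generated by $t$ from the filter generated by $s$ by a prime ideal $P$ (prime ideal theorem), and concludes that $h^{\ast }\left( P\right) =\emptyset $ would have to be a prime ideal, contradicting properness of $h.$ You instead handle the empty case pointwise: from $d\left( h\left( a\right) \right) =\emptyset $ you deduce, using that $d$ is an injective homomorphism precisely because $M$ is distributive, that $h\left( a\right) \wedge x=h\left( a\right) $ for all $x\in M,$ so $h\left( a\right) $ is itself the minimum of $M.$ This changes where the two hypotheses do their work: in your argument properness of $h$ enters only through the well-definedness of $\mathfrak{spec}\left( h\right) ,$ while in the paper it drives the contradiction; and your route exhibits the minimum of $M$ explicitly (as $h\left( a\right) ,$ in particular as $h\left( 0\right) $ when $L$ is bounded below), where the paper only proves existence. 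Note that both arguments ultimately rest on the prime ideal theorem for distributive lattices --- openly in the paper's separation step, hidden inside the injectivity of $d$ in yours. One citation you could sharpen: your final step, ``$M$ has a minimum, hence $\emptyset =d\left( c\right) $ is fundamental in $\mathfrak{spec}\left( M\right) $,'' tacitly uses that the natural isomorphism of Theorem \ref{spec es equivalencia} is implemented by $d$ (so that $d$ really lands in $\mathfrak{F}\left( \mathfrak{spec}\left( M\right) \right) $); this is true and is used elsewhere in the paper, but the cleanest reference is the equivalence ``$L$ has a minimum iff $\emptyset $ is fundamental in $\mathfrak{spec}\left( L\right) $'' (Proposition 5.8 of \cite{Acosta}), which is exactly what the paper's own proof invokes.
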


\begin{proof}
By the Proposition 5.6 of \cite{Acosta} we know that $\mathfrak{spec}\left( h\right) $
sends open-compact sets to open-compact sets by inverse image. We need to
see that if $\emptyset $ is fundamental in $\mathfrak{spec}\left( L\right) $ then $%
\emptyset $ is fundamental in $\mathfrak{spec}\left( M\right) ,$ but this is equivalent
to see that if $L$ has minimum, then $M$ has minimum. (Proposition 5.8 of 
\cite{Acosta}).

\noindent We call $0$ the minimum of $L$ and suppose that $M$ has not
minimum. If $s=h\left( 0\right) ,$ there exists $t\in M$ such that $t<s.$
We call $I$ to the ideal generated by $t$ and $F$ to the filter
generated by $s.$ As $M$ is distributive, there exists a prime ideal $P$ of $%
M$ such that $I\subseteq P$ and $P\cap F=\emptyset .$ As $h$ is a proper
homomorphism, then $h^{\ast }\left( P\right) =\emptyset $ is a prime ideal
of $L,$ but this is contradictory.
\end{proof}

\noindent It is easy to check that $\mathfrak{F}:\mathfrak{FSp}\rightarrow 
\mathfrak{D}_{p}$ and $\mathfrak{spec}:\mathfrak{D}_{p}\rightarrow \mathfrak{%
FSp}$ are contravariant functors.\bigskip

\noindent The following theorem extends Theorem \ref{spec es equivalencia}.

\begin{theorem}
The functors $\mathfrak{F}$ and $\mathfrak{spec}$ are right adjoint
contravariant functors.
\end{theorem}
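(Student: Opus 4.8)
The plan is to realize the claimed adjunction through its unit and counit and then check the triangle identities. Since both functors are contravariant, asserting that $\mathfrak{F}$ and $\mathfrak{spec}$ form a right adjoint pair is the same as producing a bijection $\mathfrak{D}_{p}(L,\mathfrak{F}(X))\cong\mathfrak{FSp}(X,\mathfrak{spec}(L))$ natural in $L$ and $X$, which in turn is equivalent to giving natural transformations $h\colon 1_{\mathfrak{FSp}}\Rightarrow\mathfrak{spec}\circ\mathfrak{F}$ and $d\colon 1_{\mathfrak{D}_{p}}\Rightarrow\mathfrak{F}\circ\mathfrak{spec}$ satisfying the two triangle identities. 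Both candidates are already at hand: the unit is the family $h_{X}\colon X\to\mathfrak{spec}(\mathfrak{F}(X))$, which is a morphism of $\mathfrak{FSp}$ by Proposition \ref{hx es fuertemente continua}, and the counit is the family $d_{L}\colon L\to\mathfrak{F}(\mathfrak{spec}(L))$, $a\mapsto d(a)$, which by Theorem \ref{spec es equivalencia} is a proper isomorphism (indeed it is the isomorphism realizing $\mathfrak{F}\circ\mathfrak{spec}\cong 1_{\mathfrak{D}_{p}}$).

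First I would check that $h$ is natural. Given a strongly continuous $f\colon X\to Y$, naturality for the covariant composite $\mathfrak{spec}\circ\mathfrak{F}$ asks that $h_{Y}\circ f=\mathfrak{spec}(\mathfrak{F}(f))\circ h_{X}$. Unwinding the definitions $\mathfrak{F}(f)=f^{\ast}$ and $\mathfrak{spec}(g)=g^{\ast}$, for $x\in X$ and $F\in\mathfrak{F}(Y)$ one has $F\in\mathfrak{spec}(\mathfrak{F}(f))(h_{X}(x))$ iff $f^{\ast}(F)\in h_{X}(x)$ iff $x\notin f^{\ast}(F)$ iff $f(x)\notin F$ iff $F\in h_{Y}(f(x))$, so the two prime ideals of $\mathfrak{F}(Y)$ coincide; hence $h$ is a natural transformation. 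Naturality of $d$, together with the invertibility of each $d_{L}$ as a proper homomorphism, is exactly the content of the co-equivalence in Theorem \ref{spec es equivalencia}, so nothing further is required on the lattice side.

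It then remains to verify the triangle identities $\mathfrak{F}(h_{X})\circ d_{\mathfrak{F}(X)}=1_{\mathfrak{F}(X)}$ and $\mathfrak{spec}(d_{L})\circ h_{\mathfrak{spec}(L)}=1_{\mathfrak{spec}(L)}$, the composites being placed this way precisely because $\mathfrak{F}$ and $\mathfrak{spec}$ reverse arrows. Both reduce to computations already carried out. For the first, $\mathfrak{F}(h_{X})=(h_{X})^{\ast}$ applied to $d_{\mathfrak{F}(X)}(F)=d(F)$ gives $(h_{X})^{\ast}(d(F))=F$, which is exactly the identity established in the proof of Proposition \ref{hx es fuertemente continua}. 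For the second, $\mathfrak{spec}(d_{L})=(d_{L})^{\ast}$ sends $h_{\mathfrak{spec}(L)}(I)=\{F:I\notin F\}$ to $\{a\in L:d(a)\in h_{\mathfrak{spec}(L)}(I)\}=\{a\in L:I\notin d(a)\}=\{a\in L:a\in I\}=I$, where the middle step uses $I\notin d(a)\Leftrightarrow a\in I$. Thus both composites are identity morphisms, and $h$, $d$ are the unit and counit of a contravariant adjunction, establishing that $\mathfrak{F}$ and $\mathfrak{spec}$ are right adjoint contravariant functors.

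The individual verifications are routine; the step I expect to demand the most care is not any single computation but the bookkeeping of variances, that is, keeping the directions of $\mathfrak{F}(f)$, $\mathfrak{spec}(h)$, and the structure maps $h$ and $d$ consistent so that the triangle identities are \emph{stated} and \emph{composed} in the correct order. Once that is pinned down, the substantive ingredients — that $h_{X}$ is strongly continuous, that $(h_{X})^{\ast}\circ d=1$, and that each $d_{L}$ is a proper isomorphism — have all been supplied by Proposition \ref{hx es fuertemente continua} and Theorem \ref{spec es equivalencia}, so the proof is essentially the assembly of these facts into the unit–counit formalism.
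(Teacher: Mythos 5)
Your proof is correct, and it takes a genuinely different route from the paper's. The paper establishes the adjunction directly at the level of hom-sets: it defines $\lambda _{(M,X)}(\alpha )=\mathfrak{spec}(\alpha )\circ h_{X}$, proves injectivity by unwinding definitions elementwise, proves surjectivity by exhibiting the preimage $\mathfrak{F}(\varepsilon )\circ d$ of an arbitrary strongly continuous $\varepsilon :X\rightarrow \mathfrak{spec}(M)$, and then verifies naturality in each of the two variables by separate element chases. You instead package the same two structure maps $h_{X}$ and $d_{L}$ as the unit and counit of a dual adjunction and reduce everything to naturality of $h$ (which you verify), naturality of $d$, and the two triangle identities $\mathfrak{F}(h_{X})\circ d_{\mathfrak{F}(X)}=1_{\mathfrak{F}(X)}$ and $\mathfrak{spec}(d_{L})\circ h_{\mathfrak{spec}(L)}=1_{\mathfrak{spec}(L)}$, which you state with the correct variance (this is indeed the delicate bookkeeping point, and you get it right: viewing $\mathfrak{F}$ as a covariant functor into $\mathfrak{D}_{p}^{op}$ turns your data into an ordinary unit--counit pair). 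Your triangle computations, $(h_{X})^{\ast }(d(F))=F$ and $(d_{L})^{\ast }(h_{\mathfrak{spec}(L)}(I))=I$, are exactly the computations the paper performs; in fact the paper's surjectivity step $\lambda (\mathfrak{F}(\varepsilon )\circ d)=\varepsilon $ is your second triangle identity composed with naturality of $h$. What your route buys is economy and conceptual transparency: naturality of the hom-set bijection in both variables comes for free from the general theory instead of being checked by hand. What the paper's route buys is self-containedness: it never invokes the equivalence between the hom-set and unit--counit formulations of a (contravariant) adjunction, which is standard but requires the variance care you yourself flag.

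Two small points to tighten. First, Theorem \ref{spec es equivalencia} as stated only asserts $\mathfrak{F}\circ \mathfrak{spec}\cong 1_{\mathfrak{D}_{p}}$; it does not name the family $d_{L}$ as the natural isomorphism, so citing it for naturality of $d$ is slightly loose. This is harmless because naturality of $d$ is a one-line check: for a proper homomorphism $f:L\rightarrow M$ and $a\in L$, $\mathfrak{F}(\mathfrak{spec}(f))(d_{L}(a))=\left\{ J:f^{\ast }(J)\in d_{L}(a)\right\} =\left\{ J:f(a)\notin J\right\} =d_{M}(f(a))$. Note also that invertibility of $d_{L}$ is never actually used in your argument; all you need is that $d_{L}$ is a \emph{proper homomorphism} so that $\mathfrak{spec}(d_{L})$ is defined, which is what the paper itself cites (Theorem 5.7 of \cite{Acosta}). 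Second, the identity $(h_{X})^{\ast }(d(F))=F$ is established in Proposition \ref{hx es fuertemente continua} only for $F\neq \emptyset $; when $\emptyset $ is fundamental one has $d(\emptyset )=\emptyset $, since every prime ideal of $\mathfrak{F}(X)$ contains the minimum, so the remaining case is immediate. With these remarks supplied, your assembly is complete and proves the theorem.
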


\begin{proof}
Let $M$ be a distributive lattice and let $X$ be a
RA-space. If $\alpha
:M\rightarrow \mathfrak{F}(X)$ is a proper
homomorphism, then $\mathfrak{spec}\left(
\alpha \right) :\mathfrak{spec}%
\left( \mathfrak{F}(X)\right) \rightarrow \mathfrak{spec}\left(
M\right) $
is a strongly continuous function and it is known that $%
h_{X}:X%
\rightarrow \mathfrak{spec}\left( \mathfrak{F}(X)\right) $ also is a
strongly continuous function (Proposition \ref{hx es fuertemente continua}%
). We have to
see that $\lambda _{\left( M,X\right) }:\left[ M,\mathfrak{F}(X)%
\right] _{%
\mathcal{D}_{p}}\rightarrow \left[ X,\mathfrak{spec}\left(
M\right) \right] _{\mathfrak{FSp}}$ defined by $\lambda _{\left( M,X\right)
}\left( \alpha \right)
=\mathfrak{spec}\left( \alpha \right) \circ h_{X}$
is a bijective function.

\noindent i) $\lambda _{\left( M,X\right) }$ is
injective:

\noindent $%
\begin{array}{l}
\lambda _{\left( M,X\right)
}\left( \alpha \right) =\lambda _{\left(
M,X\right) }\left( \beta \right)
\\   \Leftrightarrow \mathfrak{spec}\left( \alpha \right)
\circ h_{X}=%
\mathfrak{spec}\left( \beta \right) \circ h_{X} \\ 
 \Leftrightarrow \alpha
^{\ast }\circ h_{X}=\beta ^{\ast }\circ h_{X} \\ 
 \Leftrightarrow \alpha
^{\ast }\left( h_{X}\left( x\right) \right) =\beta
^{\ast }\left(
h_{X}\left( x\right) \right) ,\text{ }\forall x\in X \\ 
 \Leftrightarrow %
\left[ z\in \alpha ^{\ast }\left( h_{X}\left( x\right)
\right)
\Leftrightarrow z\in \beta ^{\ast }\left( h_{X}\left( x\right)
\right) %
\right] ,\text{ }\forall z\in M,\text{ }\forall x\in X \\ 
 \Leftrightarrow %
\left[ \alpha \left( z\right) \in h_{X}\left( x\right)
\Leftrightarrow
\beta \left( z\right) \in h_{X}\left( x\right) \right] ,%
\text{ }\forall
z\in M,\text{ }\forall x\in X \\ 
 \Leftrightarrow \left[ x\notin \alpha
\left( z\right) \Leftrightarrow
x\notin \beta \left( z\right) \right] ,%
\text{ }\forall z\in M,\text{ }%
\forall x\in X \\ 
 \Leftrightarrow
\alpha \left( z\right) =\beta \left( z\right) ,\text{ }%
\forall z\in M \\

 \Leftrightarrow \alpha =\beta .%
\end{array}%
$

\noindent ii) $%
\lambda _{\left( M,X\right) }$ is surjective: Let be $\varepsilon
:X%
\rightarrow \mathfrak{spec}\left( M\right) $ a strongly continuous map$.$%

We have that $\mathfrak{F}\left( \varepsilon \right) :\mathfrak{F}\left(
\mathfrak{spec}\left( M\right) \right) \rightarrow \mathfrak{F}(X)$ is a 
proper homomorphism.
Consider the proper homomorphism $d:M\rightarrow 
\mathfrak{F}\left(
\mathfrak{spec}\left( M\right) \right) $  (Theorem 5.7
of 
\cite{Acosta}).

\noindent $\lambda _{\left( M,X\right) }\left( 
\mathfrak{F}\left(
\varepsilon \right) \circ d\right) =\mathfrak{spec}%
\left( \mathfrak{F}\left(
\varepsilon \right) \circ d\right) \circ
h_{X}=\left( \mathfrak{F}\left(
\varepsilon \right) \circ d\right) ^{\ast
}\circ h_{X}.$ Let be $x\in X.$

\noindent $%
\begin{array}{ll}
I\in
\left( \mathfrak{F}\left( \varepsilon \right) \circ d\right) ^{\ast
}\circ
h_{X}\left( x\right) & \Leftrightarrow \left( \mathfrak{F}%
\left(
\varepsilon \right) \circ d\right) \left( I\right) \in h_{X}\left(
x\right)
\\ 
& \Leftrightarrow x\notin \left( \mathfrak{F}\left(
\varepsilon \right)
\circ d\right) \left( I\right) =\varepsilon ^{\ast
}\left( d\left( I\right)
\right) \\ 
& \Leftrightarrow \varepsilon \left(
x\right) \notin d\left( I\right) \\ 
& \Leftrightarrow I\in \varepsilon
\left( x\right) .%
\end{array}%
$

\noindent Therefore, $\lambda
_{\left( M,X\right) }\left( \mathfrak{F}\left(
\varepsilon \right) \circ
d\right) =\varepsilon .$

\noindent The family $\lambda =\left\{ \lambda
_{\left( M,X\right) }\right\}
_{\left( M,X\right) \in Ob\mathcal{D}%
_{p}\times Ob\mathfrak{FSp}}$ is a
natual bijection: Let $g\in \left[ Y,X%
\right] _{\mathfrak{FSp}}$ and $\alpha
\in \left[ M,\mathfrak{F}(X)\right] _{%
\mathcal{D}_{p}},$ we need to see that 
$\lambda _{\left( M,Y\right)
}\left( \mathfrak{F}\left( g\right) \circ
\alpha \right) =\lambda _{\left(
M,X\right) }\left( \alpha \right) \circ g.$
Take $y\in Y.$

\noindent $%
\begin{array}{ll}
I\in \lambda _{\left( M,Y\right) }\left( \mathfrak{F}%
\left( g\right) \circ
\alpha \right) \left( y\right) & \Leftrightarrow I\in 
\mathfrak{spec}\left( \mathfrak{F}%
\left( g\right) \circ \alpha \right)
\circ h_{Y}\left( y\right) \\ 
& \Leftrightarrow I\in \left( \mathfrak{F}%
\left( g\right) \circ \alpha
\right) ^{\ast }\circ h_{Y}\left( y\right) \\

& \Leftrightarrow \left( \mathfrak{F}\left( g\right) \circ \alpha
\right)
\left( I\right) \in h_{Y}\left( y\right) \\ 
& \Leftrightarrow
y\notin \left( \mathfrak{F}\left( g\right) \circ \alpha
\right) \left(
I\right) \\ 
& \Leftrightarrow y\notin g^{\ast }\left( \alpha \left(
I\right) \right) \\ 
& \Leftrightarrow g\left( y\right) \notin \alpha
\left( I\right) \\ 
& \Leftrightarrow \alpha \left( I\right) \in
h_{X}\left( g\left( y\right)
\right) \\ 
& \Leftrightarrow I\in \alpha
^{\ast }\left( h_{X}\left( g\left( y\right)
\right) \right) \\ 
&
\Leftrightarrow I\in \mathfrak{spec}\left( \alpha \right) \left( h_{X}\left(
g\left(
y\right) \right) \right) .%
\end{array}%
$

\noindent Similarly
it is obtained that for $f\in \left[ L,M\right] _{%
\mathcal{D}_{p}}$ and $%
\alpha \in \left[ M,\mathfrak{F}(X)\right] _{\mathcal{%
D}_{p}},$ it must $%
\lambda _{\left( L,X\right) }\left( \alpha \circ f\right)
=\mathfrak{spec}%
\left( f\right) \circ \lambda _{\left( M,X\right) }\left( \alpha
\right) .$ \qedhere%

\end{proof}

\noindent The co-equivalence of this adjunction is between the categories $%
\mathfrak{D}_{p}$ and $\mathfrak{BD}$.

We introduce here two full subcategories of $\mathfrak{D}_{p}$ and two full
subcategories of $\mathfrak{BD}:$

\begin{tabular}{|l|l|}
\hline
\textbf{Name} & \textbf{Objects} \\ \hline
$\mathfrak{D}_{0}$ & Distributive lattices with minimum \\ \hline
$\mathfrak{D}^{1}$ & Distributive lattices with maximum \\ \hline
$\mathfrak{US}$ & Up-spectral spaces = Almost-spectral spaces \\ 
& = Sober Balbes-Dwinger spaces \\ \hline
$\mathfrak{DS}$ & Down-spectral spaces = Compact Balbes-Dwinger spaces \\ 
\hline
\end{tabular}%
\bigskip

\begin{corollary}
The following pairs of categories are co-equivalent:

(i) $\mathfrak{D}_{0}$ and $\mathfrak{US}$.

(ii) $\mathfrak{D}^{1}$ and $\mathfrak{DS}$.
\end{corollary}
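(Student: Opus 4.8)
The plan is to realize both co-equivalences as restrictions of the co-equivalence between $\mathfrak{D}_p$ and $\mathfrak{BD}$ furnished by Theorem \ref{spec es equivalencia}. Since $\mathfrak{D}_0$ and $\mathfrak{D}^1$ are full subcategories of $\mathfrak{D}_p$ while $\mathfrak{US}$ and $\mathfrak{DS}$ are full subcategories of $\mathfrak{BD}$, and since a co-equivalence automatically restricts to a co-equivalence between a pair of full subcategories as soon as each of the two functors sends the objects of one subcategory into the objects of the other, the whole argument reduces to four object-level verifications. Once these are in place, the morphisms, the actions of $\mathfrak{F}$ and $\mathfrak{spec}$ on morphisms, and the natural isomorphisms $\mathfrak{spec}\circ\mathfrak{F}\cong 1$ and $\mathfrak{F}\circ\mathfrak{spec}\cong 1$ are inherited with no further work.

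For part (i) I would check the two inclusions on objects. First, $\mathfrak{spec}$ maps $\mathfrak{D}_0$ into $\mathfrak{US}$: if $L$ is a distributive lattice with minimum, then $\mathfrak{spec}(L)$ is a Balbes-Dwinger space which, by the lemma stating that a distributive lattice has a least element precisely when its spectrum is sober, is sober; hence $\mathfrak{spec}(L)$ is a sober Balbes-Dwinger space, i.e. an object of $\mathfrak{US}$. Conversely, $\mathfrak{F}$ maps $\mathfrak{US}$ into $\mathfrak{D}_0$: if $X$ is up-spectral, that is, a sober Balbes-Dwinger space, then $X\cong\mathfrak{spec}(\mathfrak{F}(X))$ by Theorem \ref{spec es equivalencia}, so $\mathfrak{spec}(\mathfrak{F}(X))$ is sober, and the same lemma forces $\mathfrak{F}(X)$ to possess a minimum.

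For part (ii) I would argue in the same spirit, replacing ``sober/minimum'' by ``compact/maximum''. If $L$ has a maximum $1$, then $\mathfrak{spec}(L)=d(1)$, which is compact because every basic open set $d(a)$ is compact; thus $\mathfrak{spec}(L)$ is a compact Balbes-Dwinger space, i.e. an object of $\mathfrak{DS}$. In the reverse direction, if $X$ is down-spectral, hence a compact Balbes-Dwinger space, then the whole space $X$ is open-compact, so $X\in\mathfrak{F}(X)$; being the largest element of $\wp(X)$, it is a fortiori the greatest element of $\mathfrak{F}(X)$, whence $\mathfrak{F}(X)\in\mathfrak{D}^1$ (the degenerate case $X=\emptyset$ simply yielding the one-element lattice).

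The bulk of this is routine bookkeeping; the only steps requiring genuine input are the two reverse inclusions for $\mathfrak{F}$. Of these, the delicate one is $\mathfrak{F}(\mathfrak{US})\subseteq\mathfrak{D}_0$, since there the minimum of $\mathfrak{F}(X)$ cannot be exhibited directly but must instead be produced by transporting the soberness of $X$ across the homeomorphism $X\cong\mathfrak{spec}(\mathfrak{F}(X))$ and invoking the least-element lemma; the other three inclusions are immediate. With the four inclusions established, restricting the natural isomorphisms of Theorem \ref{spec es equivalencia} to the respective full subcategories yields the two asserted co-equivalences.
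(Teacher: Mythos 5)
Your proposal is correct and matches the paper's intended argument: the paper states this corollary without proof, as an immediate restriction of the co-equivalence of Theorem \ref{spec es equivalencia} to full subcategories, justified by the identifications up-spectral $=$ sober Balbes-Dwinger $=$ spectra of lattices with minimum and down-spectral $=$ compact Balbes-Dwinger $=$ spectra of lattices with maximum established just before. Your four object-level verifications (via the least-element lemma, $\mathfrak{spec}(L)=d(1)$ for the maximum, and $X\in\mathfrak{F}(X)$ for compact $X$) simply make explicit the same ingredients the paper's preceding theorems rest on, so nothing is missing.
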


Now, it is clear that the notions of up-spectral space and down-spectral
space are mutually dual in the category $\mathfrak{BD}$.\bigskip

\begin{equation*}
\ast \ast \ast
\end{equation*}

\noindent The following diagram summarizes the previous results.

\begin{figure}[!htb]
\centering
\begin{tikzpicture}[scale=0.5]
    \draw (0.7,3.3) rectangle (24.7,18.8); \draw (12.5,4) node{RA-spaces};
    \draw (1.2,5) rectangle (20.3,18.3); \draw (3.2,6.1) node{\scriptsize ($h$ injective)}; 
                                     \draw (3.14,7.1) node{\scriptsize RA-space + $T_{0}$};
    \draw (5,5.5) rectangle (24.2,17.8); \draw (22.3,6.5) node{\scriptsize ($h$ surjective)};
                                      \draw (22.3,7.5) node{\scriptsize Almost-sober};
                                      \draw (22.3,8.3) node{\scriptsize RA-space +};                                     
    \draw (5.5,6) rectangle (19.8,17.3); \draw (13.5,6.5) node{\scriptsize ($h$ bijective)};
                                      \draw (13.5,7.5) node{\scriptsize Balbes - Dwinger ($\mathcal{D}_{p}$)};
    \draw (6,9) rectangle (15,16.8); \draw (8,9.5) node{\scriptsize ($\mathcal{D}_{0}$)};
                                   \draw (8,10.5) node{\tiny Almost-spectral};
                                   \draw (8,11.5) node{\scriptsize Up-spectral};                                   
    \draw (10,9.5) rectangle (19.3,16.3); \draw (17.2,10) node{\scriptsize ($\mathcal{D}_{1}$)};
                                   \draw (17.2,11) node{\scriptsize Down-spectral};                                      
    \draw (10.5,10) rectangle (14.5,15.8); \draw (12.5,14.3) node{\scriptsize ($\mathcal{D}_{01}$)};
                                    \draw (12.5,15.1) node{\scriptsize Spectral};                                          
    \fill (8,7.8) circle (2pt) node[below]{\footnotesize $X_{1}$}
          (8,14) circle (2pt) node[below]{\footnotesize $X_{2}$}
          (17.2,14) circle (2pt) node[below]{\footnotesize $X_{3}$}
          (12.5,12.3) circle (2pt) node[below]{\footnotesize $X_{4}$}
          (3.2,13.7) circle (2pt) node[below]{\footnotesize $X_{5}$}
          (22.6,13.7) circle (2pt) node[below]{\footnotesize $X_{6}$}
          (21.8,4.5) circle (2pt) node[below]{\footnotesize $X_{7}$};

\end{tikzpicture}
\end{figure}

\noindent The represented examples in the diagram are:

\noindent $X_{1}=\mathbb{Z}$ with the Alexandroff topology.

\noindent $X_{2}=\mathbb{Z}^{-}$ with the Alexandroff topology.

\noindent $X_{3}=\mathbb{N}$ with the Alexandroff topology.

\noindent $X_{4}=\left\{ 1,2,3,4\right\} $ with the Alexandroff topology.

\noindent $X_{5}=\mathbb{R}$ with the Alexandroff topology. $X_{5}$ is not
almost-sober because for example, $\left( -\infty ,3\right) $ is a proper
irreducible closed set that is not the closure of any point.

\noindent $X_{6}=\left\{ a,b,c\right\} $ with the topology $\left\{
\emptyset ,\left\{ a,b\right\} ,\left\{ a,b,c\right\} \right\} .$

\noindent $X_{7}=\left( \mathbb{R\times }\left\{ 0\right\} \right) \cup
\left( \mathbb{R\times }\left\{ 1\right\} \right) \cup \left\{ \left( \omega
,0\right) ,\left( \omega ,1\right) \right\} $ with the Alexandroff topology
obtained from the preorder given by: $\left( x,i\right) \leq \left(
y,j\right) $ if $x,y\in \mathbb{R}$, $x\leq y$ and $i=j;$ $\left( x,i\right)
\leq \left( \omega ,j\right) $ for all $x\in \mathbb{R}$ and all $i,j;$ $%
\left( \omega ,0\right) \leq \left( \omega ,1\right) $ and $\left( \omega
,1\right) \leq \left( \omega ,0\right) .$ $X_{7}$ is not almost-sober
because for example, $\mathbb{R\times }\left\{ 0\right\} $ is a proper
irreducible closed set that is not the closure of a point.\bigskip

\bigskip

\end{document}